\documentclass[]{scrartcl}

\usepackage[utf8]{luainputenc}
\usepackage[USenglish]{babel}
\usepackage{csquotes}

\usepackage[a4paper,top=27mm,bottom=20mm,inner=25mm,outer=20mm]{geometry}

\usepackage[%
  backend=bibtex,bibencoding=ascii,
  style=numeric-comp,
  giveninits=true, uniquename=init, 
  natbib=true,
  url=true,
  doi=true,
  isbn=false,
  backref=false,
  maxnames=99,
  ]{biblatex}
\addbibresource{references.bib}

\usepackage{amsmath}
\allowdisplaybreaks
\numberwithin{equation}{section}
\usepackage{amssymb}
\usepackage{commath}
\usepackage{mathtools}
\usepackage{bbm}
\usepackage{nicefrac}
\usepackage{subdepth}

\usepackage{siunitx}
\sisetup{separate-uncertainty=true, multi-part-units=single}

\usepackage{algorithm}
\usepackage{algpseudocode}

\usepackage{amsthm}
\usepackage{thmtools}
\usepackage{etoolbox}
\makeatletter
\patchcmd{\thmt@setheadstyle}
 {\bgroup\thmt@space}
 {\thmt@space}
 {}{}
\patchcmd{\thmt@setheadstyle}
 {\egroup\fi}
 {\fi}
 {}{}
\makeatother
\declaretheoremstyle[
  bodyfont=\normalfont\itshape,
  headformat=\NAME\ \NUMBER\NOTE,
]{myplain}
\declaretheoremstyle[
  headformat=\NAME\ \NUMBER\NOTE,
]{mydefinition}
\newcommand{\envqed}{{\lower-0.3ex\hbox{$\triangleleft$}}}
\declaretheorem[style=myplain,numberwithin=section]{theorem}
\declaretheorem[style=myplain,numberlike=theorem]{lemma}

\usepackage[plainpages=false,pdfpagelabels,hidelinks,unicode]{hyperref}

\usepackage{color}
\usepackage{graphicx}
\usepackage[small]{caption}
\usepackage{subcaption}

\begingroup\expandafter\expandafter\expandafter\endgroup
\expandafter\ifx\csname pdfsuppresswarningpagegroup\endcsname\relax
\else
  \pdfsuppresswarningpagegroup=1\relax
\fi

\usepackage{pgfplotstable}
\usepgfplotslibrary{external}
\pgfplotsset{compat=1.16}
\tikzexternalize
\usepackage{placeins}

\usepackage{booktabs}
\usepackage{rotating}
\usepackage{multirow}

\usepackage{enumitem}

\usepackage{ifluatex}
\ifluatex
  \usepackage[no-math]{fontspec}
\else
  \usepackage[T1]{fontenc}
\fi
\usepackage{newpxtext,newpxmath}


\usepackage{xparse}

\let\epsilon\varepsilon
\let\phi\varphi
\let\rho\varrho

\usepackage{xspace}

\renewcommand{\O}{\mathcal{O}}
\renewcommand{\i}{\mathrm{i}}

\providecommand\e{}
\renewcommand{\e}{\mathrm{e}}

\providecommand\R{}
\renewcommand{\R}{\mathbb{R}}

\newcommand{\bhat}{\widehat{b}}
\newcommand{\uhat}{\widehat{u}}

\newcommand{\tol}{\ensuremath{\tau}}
\newcommand{\atol}{\ensuremath{\tau_a}}
\newcommand{\rtol}{\ensuremath{\tau_r}}

\newcommand{\EPS}{\ensuremath{\mathrm{EPS}}\xspace}
\newcommand{\EPUS}{\ensuremath{\mathrm{EPUS}}\xspace}

\NewDocumentCommand{\RK}{o m O{\the\numexpr#2-1\relax} m O{} O{} o}{%
  \IfValueTF{#1}{#1}{RK}%
  #2(#3)#4%
  \ifblank{#6}{}{\textsubscript{F}}%
  \ifblank{#5}{}{[#5]}%
  \IfValueT{#7}{#7}%
}

\renewcommand{\Re}{\operatorname{Re}}

\newcommand{\dt}{\Delta t}

\renewcommand{\div}{\operatorname{div}}

\newcommand{\I}{\operatorname{I}}

\renewcommand{\vec}[1]{\pmb{#1}}

\NewDocumentCommand{\opD}{m+g}{%
  \IfNoValueTF{#2}
    {D_{#1}}
    {D_{#1,#2}}%
}
\NewDocumentCommand{\opDsplit}{m+g}{%
  \IfNoValueTF{#2}
    {\widetilde{D}_{#1}}
    {\widetilde{D}_{#1,#2}}%
}
\NewDocumentCommand{\opM}{g}{%
  \IfNoValueTF{#1}
    {M}
    {M_{#1}}%
}
\NewDocumentCommand{\opQ}{g}{%
  \IfNoValueTF{#1}
    {Q}
    {Q_{#1}}%
}
\NewDocumentCommand{\opI}{g}{%
  \IfNoValueTF{#1}
    {I}
    {I_{#1}}%
}
\NewDocumentCommand{\opV}{g}{%
  \IfNoValueTF{#1}
    {V}
    {V_{#1}}%
}
\NewDocumentCommand{\opB}{g}{%
  \IfNoValueTF{#1}
    {B}
    {B_{#1}}%
}
\NewDocumentCommand{\opR}{g}{%
  \IfNoValueTF{#1}
    {R}
    {R_{#1}}%
}
\NewDocumentCommand{\opN}{m+g}{%
  \IfNoValueTF{#2}
    {N_{#1}}
    {N_{#1,#2}}%
}

\NewDocumentCommand{\fnum}{g}{%
  \IfNoValueTF{#1}
    {f^{\mathrm{num}}}
    {f^{\mathrm{num,#1}}}%
}
\NewDocumentCommand{\vecfnum}{g}{%
  \IfNoValueTF{#1}
    {\vec{f}^{\mathrm{num}}}
    {\vec{f}^{\mathrm{num,#1}}}%
}
\NewDocumentCommand{\vecfcorr}{g}{%
  \IfNoValueTF{#1}
    {\vec{f}^{\mathrm{corr}}}
    {\vec{f}^{\mathrm{corr,#1}}}%
}
\NewDocumentCommand{\fvol}{g}{%
  \IfNoValueTF{#1}
    {f^{\smash{\mathrm{vol}}}}
    {f^{\smash{\mathrm{vol,#1}}}}%
}


\newcommand{\orcid}[1]{ORCID:~\href{https://orcid.org/#1}{#1}}
\usepackage{authblk}

\newenvironment{keywords}{\par\textbf{Key words.}}{\par}
\newenvironment{AMS}{\par\textbf{AMS subject classification.}}{\par}

\title{Stability of step size control based on a posteriori error estimates}

\author[1]{Hendrik~Ranocha\thanks{\orcid{0000-0002-3456-2277}}}
\affil[1]{Institute of Mathematics, Johannes Gutenberg University Mainz, Germany}

\author[2]{Jan Giesselmann\thanks{\orcid{0009-0008-0217-7244}}}
\affil[2]{Numerical Analysis and Scientific Computing, Technical University of Darmstadt, Germany}

\date{December 11, 2023} 

\makeatletter
\hypersetup{pdfauthor={Hendrik Ranocha, Jan Giesselmann}} 
\hypersetup{pdftitle={Stability of step size control based on a posteriori error estimates}} 
\makeatother

\begin{document}

\maketitle

\begin{abstract}
\noindent
  A posteriori error estimates based on residuals can be used for reliable error
control of numerical methods. Here, we consider them in the context of ordinary
differential equations and Runge-Kutta methods. In particular, we take the
approach of Dedner \& Giesselmann (2016) and investigate it when used to select
the time step size. We focus on step size control stability when combined with
explicit Runge-Kutta methods and demonstrate that a standard I controller is
unstable while more advanced PI and PID controllers can be designed to be
stable. We compare the stability properties of residual-based
estimators and classical error estimators based on an embedded Runge-Kutta method
both analytically and in numerical experiments.

\end{abstract}

\begin{keywords}
  Runge-Kutta methods,
  step size control,
  step size control stability,
  PID controller,
  a posteriori error estimates,
  residual-based error estimates
\end{keywords}

\begin{AMS}
  65L06, 
  65M20  
\end{AMS}

\section{Introduction}
\label{sec:introduction}

A posteriori error estimators come in different varieties; reliability, efficiency and asymptotic exactness being frequent quality criteria. The diversity in error estimators reflects the fact that they can be used for two different purposes: \emph{error control} and \emph{step size selection}. Both purposes are connected but place an emphasis on different properties.
When it comes to step size selection in adaptive numerical methods, low computational costs are paramount; this goal is achieved for explicit Runge-Kutta schemes by embedded schemes used in extrapolation mode \cite{dormand1980family}. However, this methodology provides no error control, i.e., the user cannot certify whether a given numerical simulation is compatible with some error tolerance or not.
Reliable (and efficient) error estimators have as their primary objective to control the error but can also be used to compute provably quasi-optimal meshes in elliptic and parabolic problems \cite{becker2023,kreuzer2012}. Such methods are, commonly, not used in step size control of (explicit) schemes for ordinary differential equations
(ODEs) due to the larger computational costs. In particular, they usually require to measure how much the numerical solution fails to satisfy the ODE, by the so called residual, and to relate the residual to the error by a suitable stability theory which might be based on energy or duality arguments \cite{lakkis2015}.

Nevertheless, if one decides to compute residuals  in order to ensure error control, it makes sense to also use this information for choosing step sizes and it is desirable that this leads to stable step size control.
Typically, the most simple error-based step size selection uses an I controller
that multiplies the current time step size by a factor derived from an
error estimate using asymptotic arguments. By construction, it usually works
well in this asymptotic regime of small time step sizes. However, explicit
Runge-Kutta methods also need to operate well when the time step size is
limited by stability instead of accuracy. In this situation, step size control
stability is important. The study of these properties has been initiated by
Hall \cite{hall1985equilibrium,hall1986equilibrium} with further refinements
and applications together with Higham
\cite{hall1988analysis,higham1990embedded}.

One option to obtain step size control stability when using embedded
Runge-Kutta methods such as the classical schemes of
Bogacki and Shampine \cite{bogacki1989a32,bogacki1996efficient}
or Dormand and Prince \cite{dormand1980family} is to design the methods
specifically to allow step size control stability with the classical
I controller \cite{higham1990embedded}. However, most schemes of this class
used nowadays make use of more advanced controllers such as PI and PID
controllers developed for example in
\cite{gustafsson1988pi,gustafsson1991control,soderlind2006time,soderlind2006adaptive}.
As demonstrated in \cite{ranocha2023error,ranocha2021optimized}, these
controllers can be used together with embedded Runge-Kutta method for
efficient and robust time step size control in the context of compressible
computational fluid dynamics where the stability limited regime is crucial
due to the Courant-Friedrichs-Lewy step size restriction \cite{courant1967partial}.

In the following Section~\ref{sec:basics}, we introduce the notation
and basic ideas of the methods. Next, we investigate the step size control
stability of methods derived from the residual-based a posteriori error
estimators of \cite{dedner2016posteriori} analytically in
Section~\ref{sec:control-stability} and numerically in
Section~\ref{sec:numerical_experiments}. Finally, we summarize and discuss
our results in Section~\ref{sec:summary}. All source code required to
reproduce the numerical experiments is available online in our
reproducibility repository \cite{ranocha2023stabilityRepro}.

\section{Basic ideas of step size control and a posteriori error estimates}
\label{sec:basics}

Consider a system of  ODEs
\begin{equation}
\label{eq:ode}
  u'(t) = f\bigl( t, u(t) \bigr),
  \quad
  u(0) = u^0 \in \R^m.
\end{equation}
One step of an explicit Runge-Kutta method can be written as
\cite{hairer2008solving,butcher2016numerical}
\begin{eqnarray}
\label{eq:RK-stages}
  y^i
  &=&
  u^n + \dt_n \sum_{j=1}^{i-1} a_{ij} \, f(t^n + c_j \dt_n, y^j),
  \qquad i \in \{1, \dots, s\},
  \\
  \label{eq:RK-step}
  u^{n+1}
  &=&
  u^n + \dt_n \sum_{i=1}^{s} b_{i} \, f(t^n + c_i \dt_n, y^i),
\end{eqnarray}
where $y^i$ are the stage values, $u^{n}$ is the numerical solution
approximating $u$ at time $t^n$, and $t^{n+1} = t^n + \dt_n$.
As usual, we assume that the row-sum condition
$\forall i\colon c_i = \sum_{j} a_{ij}$ is satisfied so that it
suffices to consider autonomous problems.

Given an approach to estimate the error made in one step and a tolerance,
a common approach is to compute a weighted error estimator $w_{n+1}$
of the form ``error estimate divided by tolerance''
\cite[Section~II.4]{hairer2008solving}.
Let $|e_{n+1}|$ be an error estimate. If only an absolute tolerance $\tau$
is used (and no relative tolerance), the weighted error estimator would
simply be $w_{n+1} = |e_{n+1}| / \tau$.
Thus, $w_{n+1} \le 1$ means that the desired tolerance is achieved.
Setting $ \epsilon_{n+1} = \frac{1}{w_{n+1}}$, classical methods for
choosing time step sizes are based on I, PI, and PID controllers, e.g.,
\cite{soderlind2006time,soderlind2006adaptive,kennedy2000low}
\begin{equation}
\label{eq:PID}
  \dt_{n+1} = \kappa\Bigl(%
                  \epsilon_{n+1}^{\beta_1 / k}
                  \epsilon_{n }^{\beta_2 / k}
                  \epsilon_{n-1}^{\beta_3 / k} \Bigr) \dt_{n},
\end{equation}
where $k$ is chosen such that $w_{n+1}$ is expected to be of order $\Delta t_n^k$.
The function $\kappa$ is a step size limiter,
which we choose as $\kappa(a) = 1 + \arctan(a - 1)$ \cite{soderlind2006adaptive}.
The real numbers $\beta_i$ are the controller parameters.
The general form \eqref{eq:PID} of a PID controller reduces to a PI controller
for $\beta_3 = 0$ and to a classical I controller for $\beta_2 = \beta_3 = 0$.
There are different ways in which the estimator $w_{n+1}$ can be obtained.

A classical approach to step size control is to obtain an error estimate
via an embedded method that consists of \eqref{eq:RK-stages} and
\begin{equation}
  \uhat^{n+1} = u^n + \dt_n \sum_{i=1}^{s} \bhat_{i} \, f(t^n + c_i \dt_n, y^i) + \dt_n \bhat_{s+1} f(t^{n+1}, u^{n+1}).
\end{equation}
Typically, these methods are used in local extrapolation mode, i.e., the main
method is of order~$p$ and the embedded method is of order $\widehat{p} = p - 1$.
Then, $k$ is chosen as $k= \min(p, \widehat{p}) + 1$, i.e., $k = p$ due to
the local extrapolation mode.
If $\bhat_{s+1} \ne 0$, the right-hand side of the new approximation
$u^{n+1}$ is used. This first-same-as-last (FSAL) technique was introduced to
improve the performance of the error estimator $u^{n+1} - \uhat^{n+1}$
\cite{dormand1980family}. Overall, one obtains
\begin{equation}
\label{eq:weighted-error-estimate-embedded}
  w_{n+1} = \left( \frac{1}{m} \sum_{i=1}^{m} \left( \frac{u_i^{n+1} - \uhat_i^{n+1}}{\atol + \rtol \max\{ \vert u_i^{n+1}\vert, \vert u_i^{n} \vert \}} \right)^2 \right)^{1/2},
\end{equation}
where  $\atol, \rtol > 0$ are absolute and relative
tolerances and  $m$ is the total number of degrees of freedom in $u$,
cf.\ \cite[Section~II.4]{hairer2008solving}.

While step size control based on embedded Runge-Kutta schemes is highly efficient (for explicit schemes) and works very well in practice it has the (theoretical) drawback that it does not provide any rigorous upper bounds for the error.

In contrast, the error estimators described in the next section lead to provable upper bounds for the error but are more expensive to compute.

\subsection{Energy based a posteriori error estimates}
\label{sec:a-posteriori-error-estimates}

The basic idea of this type of error estimators is to compute a sufficiently regular reconstruction $\widehat u$ from the numerical solution, to compute the residual
\begin{equation}\label{eq:res}
  R := \frac{\dif}{\dif t} \widehat{u} - f(\widehat{u}),
\end{equation}
and to use a suitable stability theory of the ODE to bound the difference between $u$ and $\widehat{u}$ in terms of $R$.
Indeed, if $f$ satisfies a one-sided Lipschitz condition, i.e. there exists $L\in \mathbb{R}$ such that for all $u,v \in \mathbb{R}^m$
\begin{equation}\label{eq:osL}
 \langle f(u) - f(v) , u -v \rangle \leq L \| u- v\|^2,
\end{equation}
then
Gronwall's lemma implies, in the  (scaled) Euclidean norm on $\mathbb{R}^m$,
\begin{lemma}
\label{lem:estimate}
 Let $u$ be a solution of \eqref{eq:ode} and let $\widehat u$ solve \eqref{eq:res}. Then, for all $0 \leq t \leq T$
\begin{equation}\label{eq:gron}
 \| u(t) - \widehat u(t)\| \leq \left(  \| u(0) - \widehat u(0)\|
 + \| R \exp(-L \cdot) \|_{L^1(0,T)} \right) \e ^{Lt}
\end{equation}
where $L$ is such that \eqref{eq:osL} holds.
If $\widehat u$ is the reconstruction of a numerical solution satisfying $\widehat{u}(t^n) = u^n$, then as an immediate consequence for all $0 \leq t^n \leq T$
\[   \| u(t^n) - u^n\| \leq \left(  \| u(0) - \widehat u(0)\|
 + \| R\exp(-L \cdot)\|_{L^1(0,T)} \right) \e ^{LT}.\]
\end{lemma}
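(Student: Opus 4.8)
The plan is to run a standard Gronwall argument on the error $e := u - \widehat{u}$. Since $u$ solves \eqref{eq:ode} and $\widehat{u}$ satisfies the residual equation \eqref{eq:res}, subtracting the two identities yields the evolution equation
\begin{equation*}
  e'(t) = f\bigl(u(t)\bigr) - f\bigl(\widehat{u}(t)\bigr) - R(t),
\end{equation*}
written in the autonomous form licensed by the row-sum condition. The only structural input I would then use is the one-sided Lipschitz condition \eqref{eq:osL}, which is precisely tailored to control the inner product $\langle f(u) - f(\widehat{u}),\, u - \widehat{u}\rangle$ rather than the full difference $\|f(u) - f(\widehat{u})\|$; this is what lets the argument go through with the weak hypothesis \eqref{eq:osL} instead of genuine Lipschitz continuity.

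Next I would pass to the (scaled) Euclidean norm and differentiate $\|e\|^2$:
\begin{equation*}
  \tfrac{1}{2} \tfrac{\dif}{\dif t} \|e(t)\|^2
  = \langle e'(t), e(t)\rangle
  = \langle f(u) - f(\widehat{u}),\, e\rangle - \langle R, e\rangle
  \le L \|e\|^2 + \|R\|\, \|e\|,
\end{equation*}
using \eqref{eq:osL} on the first term and the Cauchy-Schwarz inequality on the second. Dividing by $\|e\|$ on the set where $e \neq 0$ gives the scalar differential inequality $\tfrac{\dif}{\dif t}\|e\| \le L\|e\| + \|R\|$. Multiplying by the integrating factor $\e^{-Lt}$ turns this into $\tfrac{\dif}{\dif t}\bigl(\e^{-Lt}\|e(t)\|\bigr) \le \e^{-Lt}\|R(t)\|$, and integrating from $0$ to $t$ yields
\begin{equation*}
  \e^{-Lt}\|e(t)\| \le \|e(0)\| + \int_0^t \e^{-Ls}\|R(s)\|\,\dif s .
\end{equation*}
Since the integrand is nonnegative and $t \le T$, the integral is bounded by $\|R\exp(-L\cdot)\|_{L^1(0,T)}$, and multiplying through by $\e^{Lt}$ produces exactly \eqref{eq:gron}. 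For the final consequence I would specialise to $t = t^n$: the reconstruction property $\widehat{u}(t^n) = u^n$ turns the left-hand side into $\|u(t^n) - u^n\|$, and bounding $\e^{Lt^n} \le \e^{LT}$ via $t^n \le T$ (the relevant case $L \ge 0$) gives the stated uniform bound.

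The main obstacle I anticipate is the non-differentiability of $t \mapsto \|e(t)\|$ at times where $e(t) = 0$, which makes the division by $\|e\|$ above only formal. The clean way around this is to work with the regularised quantity $g_\delta(t) := \sqrt{\|e(t)\|^2 + \delta^2}$ for $\delta > 0$, which is smooth; the same computation, together with $\|e\|^2/g_\delta = g_\delta - \delta^2/g_\delta$ and $\|e\|/g_\delta \le 1$, gives $g_\delta' \le L g_\delta + \|R\| + |L|\,\delta$, and letting $\delta \to 0$ recovers the inequality without the spurious term. Alternatively, one observes that $\|e\|$ is locally Lipschitz, hence absolutely continuous, so the differential inequality holds almost everywhere and may be integrated directly — which is precisely the Gronwall lemma invoked in the statement.
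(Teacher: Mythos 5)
Your proof is correct, but it diverges from the paper's argument at the decisive step. Both proofs start identically: form the error equation, test with $u-\widehat u$, and use the one-sided Lipschitz condition \eqref{eq:osL} plus Cauchy--Schwarz to get $\tfrac12\tfrac{\dif}{\dif t}\|e\|^2 \le L\|e\|^2 + \|R\|\,\|e\|$. From there the paper never divides by $\|e\|$: it sets $y_1(t) = \e^{-Lt}\|e(t)\|$, derives the \emph{integral} inequality $\tfrac12 y_1^2(t) \le \tfrac12 y_1^2(0) + \int_0^t \|R(s)\|\e^{-Ls} y_1(s)\,\dif s$ for the \emph{squared} quantity, and then invokes a quadratic Gronwall-type lemma \cite[Theorem 5]{Dragomir_2003} to conclude $y_1(t) \le y_1(0) + \int_0^t \|R(s)\|\e^{-Ls}\,\dif s$ directly. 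This sidesteps entirely the non-differentiability of $\|e\|$ at zeros of $e$, at the price of citing a less standard Ou-Iang/Dragomir-type inequality. You instead pass to the linear differential inequality $\tfrac{\dif}{\dif t}\|e\| \le L\|e\| + \|R\|$ and integrate with the factor $\e^{-Lt}$, which forces you to confront exactly the degeneracy the paper avoids --- and you correctly identify it and repair it, either by the regularisation $g_\delta = \sqrt{\|e\|^2+\delta^2}$ (your computation $g_\delta' \le Lg_\delta + \|R\| + |L|\delta$ checks out, since $\delta^2/g_\delta \le \delta$) or by the absolute-continuity/a.e. argument (at a zero of $e$ where $\|e\|$ is differentiable, the derivative vanishes, so the inequality holds trivially there). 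Your route is more elementary and self-contained; the paper's is shorter once the quoted lemma is accepted. One small caveat applies to both: the final passage from $\e^{Lt^n}$ to $\e^{LT}$ uses $t^n \le T$ and is only monotone in the right direction when $L \ge 0$, a restriction you flag explicitly and the paper leaves implicit.
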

This provides an a posteriori error bound once it is clear how $\widehat u$ is computed from the numerical solution since $R$ can be computed by evaluating \eqref{eq:res}.

\begin{proof}[Proof of Lemma~\ref{lem:estimate}]

We multiply \eqref{eq:ode} minus \eqref{eq:res} by $u-\widehat u$  and obtain
\[ \frac12 \frac{\dif}{\dif t} \|u-\widehat u\|^2 = \langle f(u) - f(\widehat u),u-\widehat u \rangle - \langle R , u- \widehat{u} \rangle \leq L \|u-\widehat u\|^2 + \| R \|\|u-\widehat u\|. \]
Thus, setting $y_1(t)  := \exp(-Lt)  \|u(t)-\widehat u(t)\|$ we get
\[
 \frac{\dif}{\dif t} y_1^2(t) \leq 2 \|R(t)\|\exp(-2Lt) \|u(t)-\widehat u(t)\| = 2 \|R(t)\| \exp(-Lt) y_1(t)
\]
such that integrating in time leads to
\[
 \frac 12 y_1^2(t) \leq \frac 12 y_1^2(0) + \int_0^t \|R(s)\| \exp(-Ls) y_1(s) \dif s.
\]
Invoking \cite[Theorem 5]{Dragomir_2003} leads to
\[
 y_1(t) \leq y_1(0) + \int_0^t \|R(s)\| \exp(-Ls) \dif s.
\]
Inserting the definition of $y_1$ and multiplying by $\exp(Lt)$ gives the desired result.
\end{proof}

If $\| R \exp(-L \cdot) \|_{L^1(0,T)}$ is computed for error control,  it seems reasonable to choose step sizes based on $\| R\|_{L^1(t^n,t^{n+1})}$.
However, certain care is  needed in this approach: First of all, it needs to be ensured by a suitable reconstruction strategy that $\| R\|_{L^1(t^n,t^{n+1})}$ is indeed of order $\Delta t_n^{p+1}$ for a $p$-th order scheme and sufficiently regular solutions. Defining such a reconstruction is not trivial since the stage values of RK schemes do not contain high order information directly and computing the residual involves taking a derivative which might lead to the loss of one order of convergence. Indeed, it turns out that, in general, for any numerical scheme a dedicated reconstruction method needs to be derived in order to ensure that $\| R\|_{L^1(t^n,t^{n+1})}$ is of order $\Delta t_n^{p+1}$.

It should also be kept in mind that there are many interesting scenarios where the step from \eqref{eq:res} to \eqref{eq:gron} is far less straightforward. One scenario is that no $L$ exists such that \eqref{eq:osL} holds. Another scenario that frequently arises when $f$ stems from the spatial discretization of a PDE is that $L$ is positive and depends on an inverse power of the spatial mesh width. In such cases a more sophisticated stability analysis is needed in order to connect errors and residuals in an optimal way.

This line of thought can be traced back, at least, to \cite{makridakisnochetto2006} and was expunged in detail in \cite{makridakis2007}. These works address discretizations of parabolic PDEs and implicit time discretizations. It turns out that in case explicit RK schemes are applied to ODEs or semi-discretizations of first order hyperbolic PDEs a more generic method based on Hermite interpolation can be used \cite{dedner2016posteriori}.
Indeed, inspection of the proof of \cite[Theorem 2.1]{dedner2016posteriori}, which only covers equidistant time steps, shows that the reconstructions of Hermite-type considered here satisfy
\begin{equation}\label{eq:opt}\| R \|_{L^\infty(t^n, t^{n+1})} \leq C \Delta t_n^{p} \end{equation}
provided the Runge-Kutta scheme and $f$ are such that the consistency error is bounded by $\tilde C \Delta t_n^{p+1}$.
Detailed formulas of these reconstructions for generic cases can be found in \cite{dedner2016posteriori}.
Note that  \cite{dedner2016posteriori} also considers reconstructions using values from previous time steps and for those \eqref{eq:opt} will not hold, since the left-hand side also depends on sizes of previous time steps.
All reconstructions considered in this paper only use values from the current time step and we will provide explicit formulas for the reconstructions we investigate in this paper, e.g., \eqref{eq:cubic-hermite-central}.

A key observation of \cite{dedner2016posteriori} is that $f(t^n, u^n)$ is not only readily available in building the reconstruction since it is anyway computed during the time step but also known to be accurate enough in this scenario.
Step size control for explicit RK schemes and the estimators from  \cite{dedner2016posteriori} is what we are going to investigate in this paper.

In the context of these methods we
evaluate the weighted error estimator either in the $L^1$ norm as
\begin{equation}
\label{eq:weighted-estimate-residual-L1}
  w_{n+1} = \frac{\| R \|_{L^1(t^n, t^{n+1})}}{\atol + \rtol \max\{ \|u^n\|, \|u^{n+1}\| \}}
\end{equation}
or in the $L^2$ norm as
\begin{equation}
\label{eq:weighted-estimate-residual-L2}
  w_{n+1} = \frac{\sqrt{\dt_n} \| R \|_{L^2(t^n, t^{n+1})}}{\atol + \rtol \max\{ \|u^n\|, \|u^{n+1}\| \}}.
\end{equation}
Note the multiplication by $\sqrt{\dt_n}$ in
\eqref{eq:weighted-estimate-residual-L2}
ensures the correct scaling in terms of the time step size $\dt_n$.
Since the estimator is of the same order as the main method, we use $k=p+1$.

\subsection{Error per step versus error per unit step}
\label{sec:EPS-vs-EPUS}

There are several interesting properties of step size controllers.
While the main focus of this paper is \emph{step size control stability},
discussed in Section ~\ref{sec:control-stability}, the current section discusses
two other important properties: \emph{tolerance convergence} and the stronger
property \emph{tolerance proportionality}.
\emph{Tolerance convergence} means
$\tau \to 0 \implies \| E \| \to 0$,
i.e., to obtain a global error $E$ converging to zero when the tolerance $\tau$
goes to zero.
\emph{Tolerance proportionality} means
$\exists c, C > 0\colon c \tau \le \|E\| \le C \tau$ for all sufficiently
small $\tau > 0$,
i.e., to obtain a global error that is roughly proportional to the
given tolerance.
In this section, we discuss these properties for \emph{error per step} (\EPS)
and \emph{error per unit step} (\EPUS) control based on
\eqref{eq:weighted-estimate-residual-L1} or \eqref{eq:weighted-estimate-residual-L2}.

The formulas \eqref{eq:weighted-estimate-residual-L1} and
\eqref{eq:weighted-estimate-residual-L2} estimate the \emph{error per step} and
adaptation as in \eqref{eq:PID} aims at keeping it below a given tolerance.
An alternative is to control the \emph{error per unit step}.
Then, the tolerance should control the error per step estimate $|e_{n+1}|$
divided by the step size $\dt_n$, i.e., $\tau \ge |e_{n+1}| / \dt_n$
instead of $\tau \ge |e_{n+1}|$ for \EPS control.
In this case, the controllers still have the same form as above but use
$\epsilon_{n+1} = \dt_n / w_{n+1}$ instead of
$\epsilon_{n+1} = 1 / w_{n+1}$.

Higham \cite{higham1991global} analyzed tolerance proportionality for
different control strategies. He proved that tolerance proportionality
is obtained for a $p$th-order method if a local error estimate
scaling as $\O(\dt_n^p)$ is chosen as control objective
\cite[Corollary~2.1]{higham1991global}. In particular, this is the case
if the local error per step is controlled for an explicit Runge-Kutta pair
in local extrapolation mode, i.e., where a main method of order $p$ is
coupled with an embedded method of order $\widehat{p} = p - 1$.
This is also the case when the local error per unit step is controlled based on
an error estimate $|e_{n+1}| = \O(\dt_n^{p+1})$
--- which is the scaling of the
residual-based error estimators described in
Section~\ref{sec:a-posteriori-error-estimates}.

Butcher \cite[Sections~371--373]{butcher2016numerical} argues
that an \EPS control is close to producing ``optimal'' step size sequences
(in a sense discussed there). Thus,  an \EPS control can be considered to be better than
an \EPUS control, even if the  \EPUS control provides  tolerance proportionality
and the \EPS does not.

Moreover, tolerance convergence can still be expected from an \EPS
control if the corresponding \EPUS control leads to tolerance
proportionality. To see this, let $|e_{n+1}|$ be the local error estimate
and $\tau$ the tolerance.
The \EPUS control goal is to achieve $|e_{n+1}| / \dt \le \tau_\EPUS$.
The \EPS control has the goal $|e_{n+1}| \le \tau_\EPS$.
Thus, \EPS and \EPUS control have the same goal if
$\tau_\EPS = \tau_\EPUS \dt$.
For a $p$th-order method with tolerance proportionality, the step size
will scale as $\dt \propto \tau_\EPUS^{1 / p}$.
Thus, tolerance proportionality
can be achieved for \EPS control with rescaled tolerance
$\tau_\EPS = \tau_\EPUS \dt \propto \tau_\EPUS \tau_\EPUS^{1 / p}
= \tau_\EPUS^{(p + 1) / p}$. Re-arranging, we can expect to obtain
a global error scaling as $\tau_\EPS^{p / (p + 1)}$, still leading to
 tolerance convergence.
Butcher \cite[Section~373]{butcher2016numerical} argues that this is
 fine in practice --- due to the link of \EPS control
to ``optimal'' step size sequences. Such a behavior does indeed occur
in practice for well-known numerical integrators such as \textsc{Dassl}
\cite{soderlind2006adaptive}.
Since \EPS control is common for the classical approach of using an
embedded method with local extrapolation, we concentrate on this mode
in the following section.

\section{Step size control stability}
\label{sec:control-stability}

In this section, we analyze step size control stability for residual
error estimators and compare it to the situation for embedded methods.
We follow the presentation of \cite[Section~IV.2]{hairer2010solving}
to introduce the concept of step size control stability.
Consider the scalar test problem
\begin{equation}
\label{eq:test-problem}
  u'(t) = \lambda u(t),
  \quad
  u(0) = u^0,
\end{equation}
for $\lambda \in \mathbb{C}$ and a simplified I controller\footnote{An I controller is a PID controller with
$\beta_2 = \beta_3 = 0$. Here, we got rid of the step size limiter and set the
first parameter $\beta_1 = 1$. This is a classical deadbeat controller.} of the
form
\begin{equation}\label{eq:eps}
  \dt_{n+1} = \epsilon_{n+1}^{1 / k} \dt_n,
  \qquad
  \epsilon_{n+1} = \frac{\tol}{|e_{n+1}|},
\end{equation}
where $\tau$ is a fixed tolerance and $|e_{n+1}|$ is the error estimate, e.g.,
$|e_{n+1}| = | u^{n+1} - \uhat^{n+1} |$ when an embedded method is used.
This yields the dynamical system
\begin{equation}
\begin{aligned}
  u^{n+1} &= R(\dt_n \lambda) u^{n},
  \\
  \dt_{n+1} &= \dt_n \left( \frac{\tau}{|e_{n+1}|} \right)^{1 / k},
\end{aligned}
\end{equation}
where $R$ is the stability function of the (main) Runge-Kutta method.
Here, we have assumed that the error estimate $|e_{n+1}|$ depends
only on $u^{n}$ and $\dt_n$ so that no additional equation
for $|e_{n+1}|$ is required.
This assumption holds for all error estimates considered in this article.
The analysis can be simplified by introducing logarithms
\begin{equation}
  \eta_n = \log| u^{n} |, \quad \chi_n = \log \dt_n,
\end{equation}
resulting in
\begin{equation}
\label{eq:dyn-sys-I}
\begin{aligned}
  \eta_{n+1} &= \log| R(\e^{\chi_n} \lambda) | + \eta_n,
  \\
  \chi_{n+1} &= \chi_n + \frac{1}{k} \left( \log(\tol) - \log|e_{n+1}| \right).
\end{aligned}
\end{equation}
To study the step size control stability, we investigate the stability properties
of fixed points defined by
\begin{equation}
  | R(\e^{\chi_n} \lambda) | = 1,
  \quad
  \log|e_{n+1}| = \log(\tol).
\end{equation}
The first equation states that the step size $\dt_n = \e^{\chi_n}$ is chosen
such that $z = \dt_n \lambda$ is on the boundary of the stability region of the
Runge-Kutta method. A stable behavior requires that the spectral radius of the
Jacobian
\begin{equation}
\label{eq:jacobian-I}
  J =
  \begin{pmatrix}
    1 & \mu \\
    -\frac{1}{k} \partial_{\eta_{n}} \log|e_{n+1}| &
      1 - \frac{1}{k} \partial_{\chi_{n}} \log|e_{n+1}|
  \end{pmatrix},
  \qquad
  \mu = \Re\biggl( \frac{R'(z)}{R(z)} z \biggr),
\end{equation}
does not exceed unity.

For a simplified PID controller of the form
\begin{equation}
  \dt_{n+1} = \epsilon_{n+1}^{\beta_1 / k}
              \epsilon_{n  }^{\beta_2 / k}
              \epsilon_{n-1}^{\beta_3 / k} \dt_n,
  \qquad
  \epsilon_{n+1} = \frac{\tol}{|e_{n+1}|},
\end{equation}
the dynamical system becomes
\begin{equation}
\label{eq:dyn-sys-PID}
\begin{aligned}
  \eta_{n+1} &= \eta_n + \log| R(\e^{\chi_n} \lambda) |,
  \\
  \chi_{n+1} &= \chi_n
              + \frac{\beta_1}{k} \left( \log(\tol) - \log|e_{n+1}| \right)
              + \frac{\beta_2}{k} \left( \log(\tol) - \log|e_{n  }| \right)
  \\
  &\quad\quad\;\,
              + \frac{\beta_3}{k} \left( \log(\tol) - \log|e_{n-1}| \right).
\end{aligned}
\end{equation}
This can be considered as a dynamical system mapping from the indices
$(n, n-1, n-2)$ to the indices $(n+1, n, n-1)$. The corresponding Jacobian is
\begin{equation}
\label{eq:jacobian-PID}
\begin{gathered}
  J =
  \begin{pmatrix}
    1 & \mu & 0 & 0 & 0 & 0
    \\
    -\frac{\beta_1}{k} \frac{\partial l_{n+1}}{\partial \eta_{n}} &
      1 - \frac{\beta_1}{k} \frac{\partial l_{n+1}}{\partial \chi_{n}} &
      -\frac{\beta_2}{k} \frac{\partial l_{n}}{\partial \eta_{n-1}} &
      - \frac{\beta_2}{k} \frac{\partial l_{n}}{\partial \chi_{n-1}} &
      -\frac{\beta_3}{k} \frac{\partial l_{n-1}}{\partial \eta_{n-2}} &
      - \frac{\beta_3}{k} \frac{\partial l_{n-1}}{\partial \chi_{n-2}}
    \\
    1 & 0 & 0 & 0 & 0 & 0
    \\
    0 & 1 & 0 & 0 & 0 & 0
    \\
    0 & 0 & 1 & 0 & 0 & 0
    \\
    0 & 0 & 0 & 1 & 0 & 0
  \end{pmatrix},
  \\
  l_{j} = \log|e_{j}|,
  \quad
  \mu = \Re\biggl( \frac{R'(z)}{R(z)} z \biggr),
\end{gathered}
\end{equation}
cf. \cite{kennedy2000low,ranocha2021optimized}.
When an embedded method
with stability function $\widehat{R}$ is used to estimate the error,
we consider the polynomial $E(z) = R(z) - \widehat{R}(z)$. Then,
the Jacobian \eqref{eq:jacobian-PID} becomes
\cite{kennedy2000low,ranocha2021optimized}
\begin{equation}
\label{eq:jacobian-PID-embedded}
\begin{gathered}
  J =
  \begin{pmatrix}
    1 & \mu & 0 & 0 & 0 & 0
    \\
    -\frac{\beta_1}{k} &
      1 - \frac{\beta_1}{k} \nu &
      -\frac{\beta_2}{k} &
      - \frac{\beta_2}{k} \nu &
      -\frac{\beta_3}{k} &
      - \frac{\beta_3}{k} \nu
    \\
    1 & 0 & 0 & 0 & 0 & 0
    \\
    0 & 1 & 0 & 0 & 0 & 0
    \\
    0 & 0 & 1 & 0 & 0 & 0
    \\
    0 & 0 & 0 & 1 & 0 & 0
  \end{pmatrix},
  \\
  \mu = \Re\biggl( \frac{R'(z)}{R(z)} z \biggr),
  \quad
  \nu = \Re\biggl( \frac{E'(z)}{E(z)} z \biggr).
\end{gathered}
\end{equation}
In the following, we will study step size control stability for several
explicit Runge-Kutta methods from first to third order of accuracy. We begin
with the explicit Euler method to illustrate the steps. The other calculations
use Mathematica \cite{mathematica12}; the corresponding notebooks are
available in our reproducibility repository \cite{ranocha2023stabilityRepro}.

\subsection{Explicit Euler method}

The most simple explicit Runge-Kutta method is the explicit Euler method
with stability function
\begin{equation}
  R(z) = 1 + z.
\end{equation}
We use the linear reconstruction polynomial
\begin{equation}
  \widehat{u}(t) = u^{n} + \frac{t - t^{n}}{t^{n+1} - t^{n}} (u^{n+1} - u^{n})
\end{equation}
for the time interval $[t^{n}, t^{n+1}]$.
Then, the weighted $L^1$ error estimate \eqref{eq:weighted-estimate-residual-L1}
is given by
\begin{equation}
\begin{aligned}
  |e_{n+1}|
  =
  \| R \|_{L^1(t^n, t^{n+1})}
  &=
  \int_{t^{n}}^{t^{n+1}} \left|
    \frac{\dif}{\dif t} \widehat{u}(t) - \lambda \widehat{u}(t)
  \right| \dif t
  \\
  &=
  \int_{t^{n}}^{t^{n+1}} (t - t^{n}) \dif t \;
  |\lambda|^2 |u^{n}|
  =
  \frac{1}{2} \dt_n^2 |\lambda|^2 |u^{n}|.
\end{aligned}
\end{equation}
The $L^2$ version \eqref{eq:weighted-estimate-residual-L1} uses
\begin{equation}
  |e_{n+1}|
  =
  \sqrt{\dt_n} \| R \|_{L^2(t^n, t^{n+1})}
  =
  \frac{1}{\sqrt{3}} \dt_n^2 |\lambda|^2 |u^{n}|.
\end{equation}
Thus, the Jacobian \eqref{eq:jacobian-I} of the I controller becomes in both
cases
\begin{equation}
  J =
  \begin{pmatrix}
    1 & \mu \\
    -\frac{1}{k} &
      1 - \frac{2}{k}
  \end{pmatrix},
  \qquad
  \mu = \Re\biggl( \frac{z}{1 + z} \biggr).
\end{equation}

\subsection{Second-order, two-stage methods}
\label{sec:RK22}

All explicit second-order, two-stage Runge-Kutta methods have the stability
function
\begin{equation}
  R(z) = 1 + z + \frac{z^2}{2}.
\end{equation}
We use the left-biased quadratic Hermite interpolation polynomial
\begin{equation}
  \widehat{u}(t)
  =
  \biggl( 1 - \frac{t^2}{\dt_n^2} \biggr) u^{n} +
  \biggl( t - \frac{t^2}{\dt_n} \biggr) f(t^{n}, u^{n}) +
  \frac{t^2}{\dt_n^2} u^{n+1}
\end{equation}
normalized to $t \in [0, \dt_n]$.
Then, the weighted $L^1$ error estimate \eqref{eq:weighted-estimate-residual-L1}
is given by
$w_{n+1} = |e_{n+1}| / (\atol + \rtol \max\{ \|u^n\|, \|u^{n+1}\| \})$
with
\begin{equation}
\begin{aligned}
  |e_{n+1}|
  =
  \| R \|_{L^1(t^n, t^{n+1})}
  &=
  \int_{t^{n}}^{t^{n+1}} \left|
    \frac{\dif}{\dif t} \widehat{u}(t) - \lambda \widehat{u}(t)
  \right| \dif t
  \\
  &=
  \frac{1}{2} \int_{0}^{\dt_n} t^2 \dif t \;
  |\lambda|^3 |u^{n}|
  =
  \frac{1}{6} \dt_n^3 |\lambda|^3 |u^{n}|.
\end{aligned}
\end{equation}
The corresponding $L^2$ version uses
\begin{equation}
  |e_{n+1}|
  =
  \sqrt{\dt_n} \| R \|_{L^2(t^n, t^{n+1})}
  =
  \frac{1}{2 \sqrt{5}} \dt_n^3 |\lambda|^3 |u^{n}|.
\end{equation}
The expression of the Jacobian \eqref{eq:jacobian-I} of the I controller with
residual error estimator becomes in both cases
\begin{equation}
  \label{eq:jacobian-RK22-I-residual}
  J =
  \begin{pmatrix}
    1 & \mu \\
  -\frac{1}{k} & 1 - \frac{3}{k} \\
  \end{pmatrix},
  \quad
  \mu = \Re\biggl( \frac{z + z^2}{1 + z + z^2/2} \biggr).
\end{equation}
The corresponding Jacobian \eqref{eq:jacobian-PID-embedded} based on an
embedded explicit Euler method
$\widehat{u}^{n+1} = u^n + \dt_n f(t^n, u^n)$ is
\begin{equation}
\label{eq:jacobian-RK22-I-embedded}
  J =
  \begin{pmatrix}
    1 & \mu \\
  -\frac{1}{k} & 1 - \frac{2}{k} \\
  \end{pmatrix},
  \qquad
  \mu = \Re\biggl( \frac{z + z^2}{1 + z + z^2/2} \biggr),
  \;
  k = 2.
\end{equation}

\begin{figure}[htb]
\centering
  \begin{subfigure}{0.49\textwidth}
    \includegraphics[width=\textwidth]{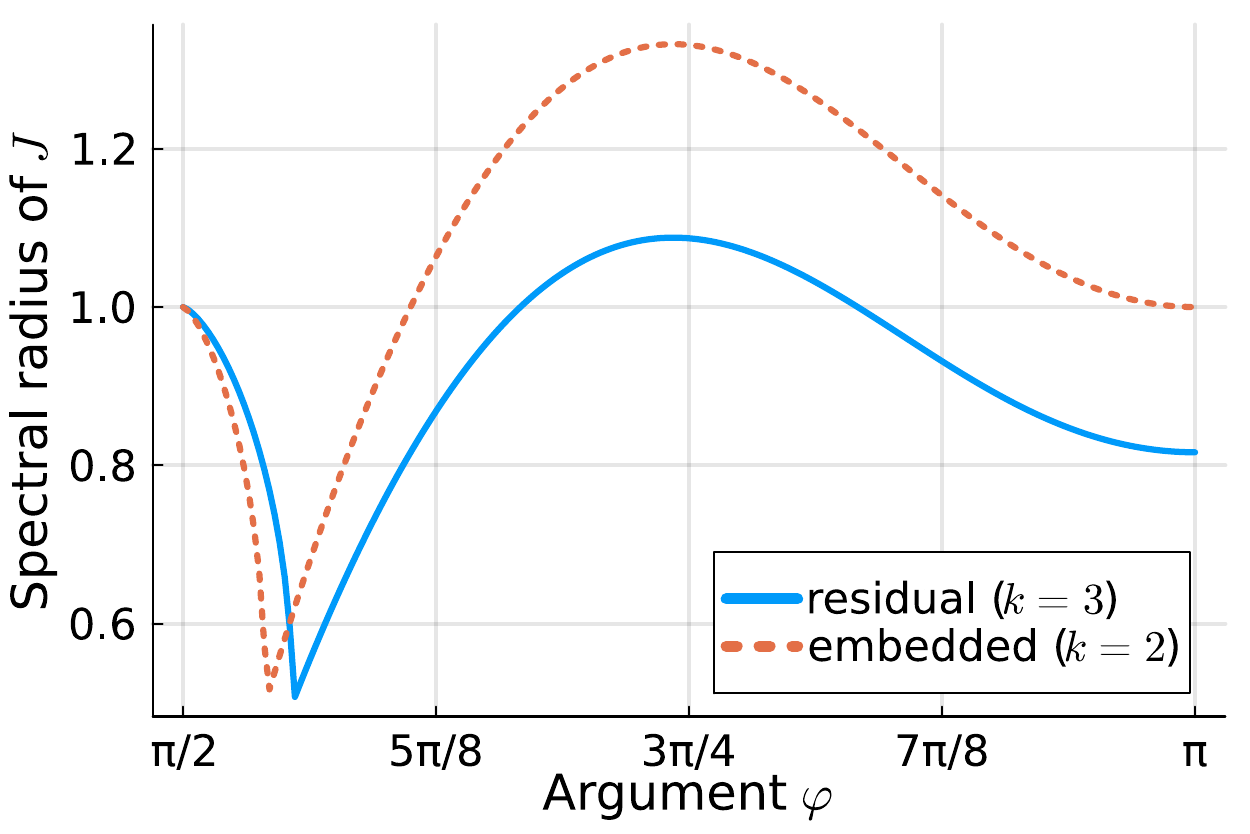}
    \caption{I controller with $\beta = (1, 0, 0)$,
             corresponding to the Jacobians
             \eqref{eq:jacobian-RK22-I-residual} and
             \eqref{eq:jacobian-RK22-I-embedded}.}
  \end{subfigure}%
  \hspace*{\fill}
  \begin{subfigure}{0.49\textwidth}
    \includegraphics[width=\textwidth]{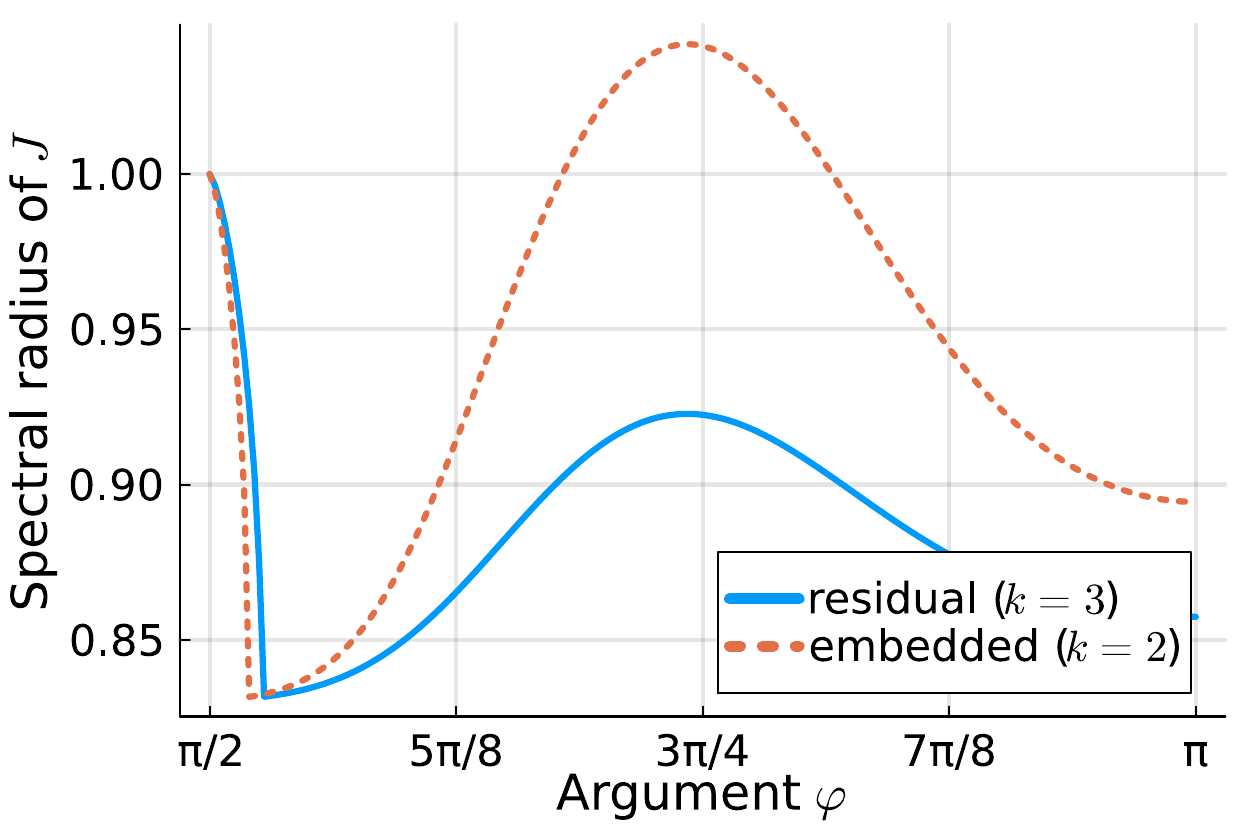}
    \caption{PI controller with $\beta = (0.6, -0.2, 0)$,
             corresponding to the Jacobians
             \eqref{eq:jacobian-RK22-PI-residual} and
             \eqref{eq:jacobian-PID-embedded}.}
  \end{subfigure}%
  \caption{Spectral radius of the Jacobian $J$ for error estimates based on
           the residual and an embedded Euler method for explicit second-order,
           two-stage Runge-Kutta methods. The Jacobian is evaluated at
           $z = r \e^{\i \phi}$ where the radius $r$ is chosen such that $z$
           is on the boundary of the stability region of the (main) method.}
  \label{fig:spectral_radius_rk22}
\end{figure}

The eigenvalues of these Jacobians are complex expressions that do not lend
themselves to an analytical investigation. Thus, we use a numerical approach
to compute and visualize the spectral radii in
Figure~\ref{fig:spectral_radius_rk22}.
First, it is clear that
the I controller does not lead to
step size control stability for all methods.
However, the instability is less severe for the residual-based approach ---
the spectral radius of the Jacobian is smaller in most regions and exceeds
unity less compared to the version using an embedded Euler method.

To check the behavior in practice, we use the test problem \eqref{eq:ode} with
\begin{equation}
\label{eq:test-problem-hairer-wanner}
  f(t, u) =
  -2000
  \begin{pmatrix}
    \cos(t) u_1 + \sin(t) u_2 + 1 \\
    -\sin(t) u_1 + \cos(t) u_2 + 1
  \end{pmatrix},
  \quad
  u^0 =
  \begin{pmatrix}
    1 \\ 0
  \end{pmatrix},
\end{equation}
in the time interval $(0.0, 1.57)$ as suggested by Hairer and Wanner
\cite[Section~IV.2]{hairer2010solving}.
With tolerances $\atol = \rtol = 10^{-4}$ and $k = 2$, the embedded approach
leads to 1877 accepted and 263 rejected steps. The $L^1$ residual-based approach
with $k = 3$
leads to 1811 accepted and 27 rejected steps.
The $L^2$ variant behaves similarly, see Table~\ref{tab:RK22}.
Details of the implementation and further numerical experiments are discussed
in Section~\ref{sec:numerical_experiments}.

The expression of the Jacobian \eqref{eq:jacobian-PID} of the PI controller
with residual error estimator becomes
\begin{equation}
\label{eq:jacobian-RK22-PI-residual}
  J =
  \begin{pmatrix}
    1 & \mu & 0 & 0 \\
    -\frac{\beta_1}{k} & 1 - \frac{3 \beta_1}{k} &
      -\frac{\beta_2}{k} & -\frac{3 \beta_2}{k} \\
    1 & 0 & 0 & 0 \\
    0 & 1 & 0 & 0
  \end{pmatrix},
  \quad
  \mu = \Re\biggl( \frac{z + z^2}{1 + z + z^2/2} \biggr).
\end{equation}
The spectral radii of the Jacobians for the PI controller with parameters
$\beta = (0.6, -0.2, 0.0)$ are
visualized in Figure~\ref{fig:spectral_radius_rk22}. Clearly, the more
involved controller leads to step size control stability for the
residual-based approach but not for the version using an embedded method.
For the test problem \eqref{eq:test-problem-hairer-wanner}, we get
1918 accepted and 55 rejected steps
for the the embedded approach while the residual-based approach leads to
1824 accepted and no rejected steps.

\begin{table}[htbp]
\centering
  \caption{Number of accepted and rejected time steps of
           Heun's second-order method with an embedded explicit Euler method
           for the test problem \eqref{eq:test-problem-hairer-wanner} with
           tolerances $\atol = \rtol = 10^{-4}$.}
  \label{tab:RK22}
  \begin{tabular*}{\linewidth}{@{\extracolsep{\fill}}c *3c *3c}
    \toprule
    & \multicolumn{3}{c}{I controller, $\beta = (1, 0, 0)$}
    & \multicolumn{3}{c}{PI controller, $\beta = (0.6, -0.2, 0)$}
    \\
    & \multicolumn{2}{c}{Residual Estimator}
    & \multicolumn{1}{c}{Embedded}
    & \multicolumn{2}{c}{Residual Estimator}
    & \multicolumn{1}{c}{Embedded}
    \\
    & \multicolumn{1}{c}{$L^1$}
    & \multicolumn{1}{c}{$L^2$}
    & \multicolumn{1}{c}{Method}
    & \multicolumn{1}{c}{$L^1$}
    & \multicolumn{1}{c}{$L^2$}
    & \multicolumn{1}{c}{Method}
    \\
    \midrule
    $k$ & 3 & 3 & 2
        & 3 & 3 & 2
    \\
    Accepted & 1811 & 1815 & 1877
              & 1824 & 1828 & 1918
    \\
    Rejected & 27 & 26 & 263
              & 0 & 0 & 55
    \\
    \bottomrule
  \end{tabular*}
\end{table}

\subsection{Third-order, three-stage methods}
\label{sec:BS3}

All explicit third-order, three-stage Runge-Kutta methods have the stability
function
\begin{equation}
  R(z) = 1 + z + \frac{z^2}{2} + \frac{z^3}{6}.
\end{equation}
We would like to use the central cubic Hermite interpolation polynomial
\begin{equation}
\label{eq:cubic-hermite-central}
\begin{aligned}
  \widehat{u}(t)
  &=
  \biggl( 1 - \frac{3 t^2}{\dt_n^2} + \frac{2 t^3}{\dt_n^3} \biggr) u^{n} +
  \biggl( t - \frac{2 t^2}{\dt_n} + \frac{t^3}{\dt_n^2} \biggr) f(t^{n}, u^{n})
  \\
  &\quad
  +
  \biggl( \frac{3 t^2}{\dt_n^2} - \frac{2 t^3}{\dt_n^3} \biggr) u^{n+1} +
  \biggl( -\frac{t^2}{\dt_n} + \frac{t^3}{\dt_n^2} \biggr) f(t^{n+1}, u^{n+1})
\end{aligned}
\end{equation}
normalized to $t \in [0, \dt_n]$.
However, we have not been able to evaluate the integral
$\| R \|_{L^1(t^n, t^{n+1})}$ analytically, even when using Mathematica
\cite{mathematica12}. Thus, we use the left-biased cubic Hermite interpolation
polynomial
\begin{equation}
\label{eq:cubic-hermite-left}
\begin{aligned}
  \widehat{u}(t)
  &=
  \biggl( 1 - \frac{t^3}{\dt_n^3} \biggr) u^{n} +
  \biggl( t - \frac{t^3}{\dt_n^2} \biggr) f(t^{n}, u^{n})
  \\
  &\quad
  +
  \biggl( \frac{t^2}{2} - \frac{t^3}{2 \dt_n} \biggr) (f_t + f_u f)(t^{n}, u^{n}) +
  \frac{t^3}{\dt_n^3} u^{n+1}
\end{aligned}
\end{equation}
normalized to $t \in [0, \dt_n]$
for a first analysis but the central version in the implementation and a
more numerically supported analysis.

\subsubsection{Left-biased cubic Hermite interpolation}

The analysis proceeds with the weighted $L^1$ error estimate
\eqref{eq:weighted-estimate-residual-L1} given by
\begin{equation}
\begin{aligned}
  |e_{n+1}|
  =
  \| R \|_{L^1(t^n, t^{n+1})}
  &=
  \int_{t^{n}}^{t^{n+1}} \left|
    \frac{\dif}{\dif t} \widehat{u}(t) - \lambda \widehat{u}(t)
  \right| \dif t
  \\
  &=
  \frac{1}{6} \int_{0}^{\dt_n} t^3 \dif t \;
  |\lambda|^4 |u^{n}|
  =
  \frac{1}{24} \dt_n^4 |\lambda|^4 |u^{n}|.
\end{aligned}
\end{equation}
The expression of the Jacobian \eqref{eq:jacobian-I} of the I controller with
residual error estimator becomes
\begin{equation}
\label{eq:jacobian-BS3-I-residual}
  J =
  \begin{pmatrix}
    1 & \mu \\
  -\frac{1}{k} & 1 - \frac{4}{k} \\
  \end{pmatrix},
  \quad
  \mu = \Re\biggl( \frac{z + z^2 + z^3 / 2}{1 + z + z^2/2 + z^3 / 6} \biggr).
\end{equation}
If we use the $L^2$ error estimate \eqref{eq:weighted-estimate-residual-L2}
instead, we get
\begin{equation}
  |e_{n+1}|
  =
  \sqrt{\dt_n} \| R \|_{L^2(t^n, t^{n+1})}
  =
  \frac{1}{6 \sqrt{7}} \dt_n^4 |\lambda|^4 |u^{n}|
\end{equation}
for the left-biased cubic Hermite interpolation
\eqref{eq:cubic-hermite-left} and thus the same Jacobian as for the $L^1$
error estimate discussed above.

\begin{figure}[htb]
\centering
  \begin{subfigure}{0.49\textwidth}
    \includegraphics[width=\textwidth]{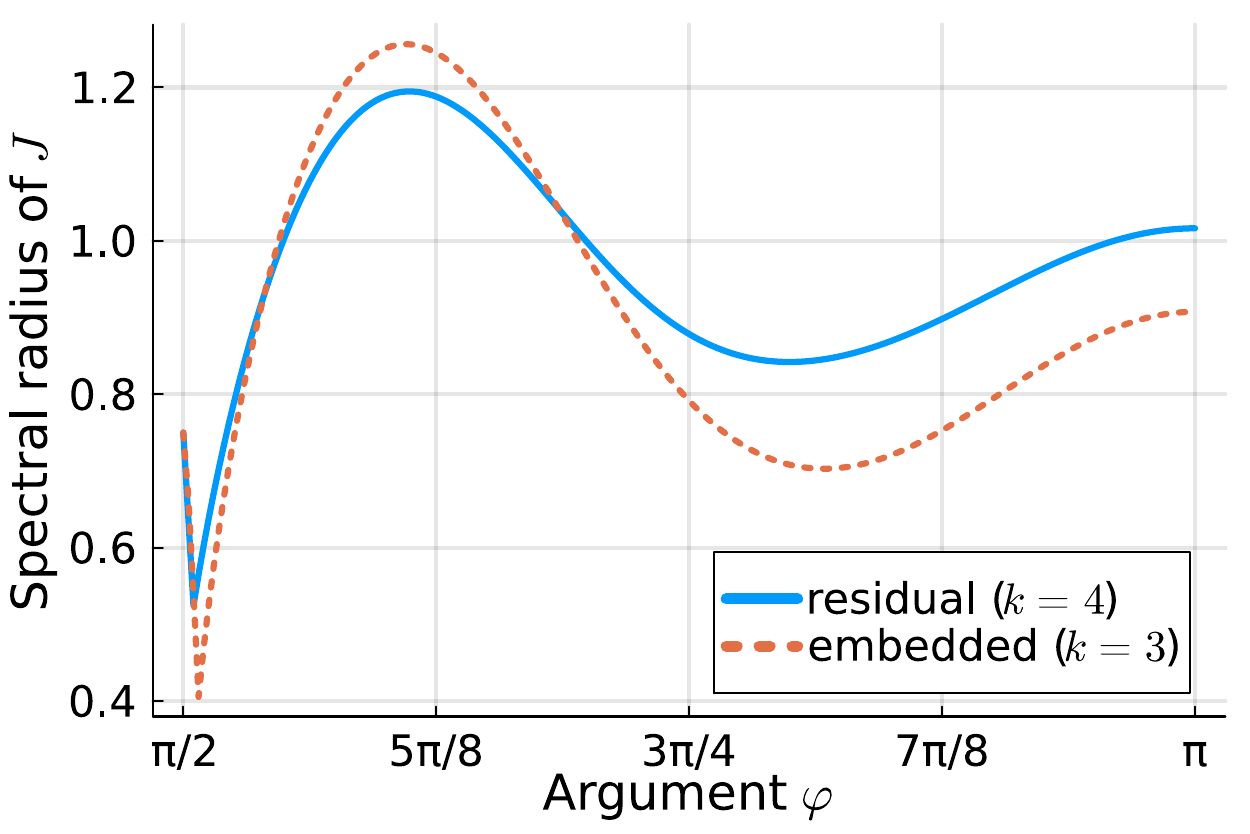}
    \caption{I controller with $\beta = (1, 0, 0)$,
             corresponding to the Jacobians
             \eqref{eq:jacobian-BS3-I-residual} and
             \eqref{eq:jacobian-PID-embedded}.}
  \end{subfigure}%
  \hspace*{\fill}
  \begin{subfigure}{0.49\textwidth}
    \includegraphics[width=\textwidth]{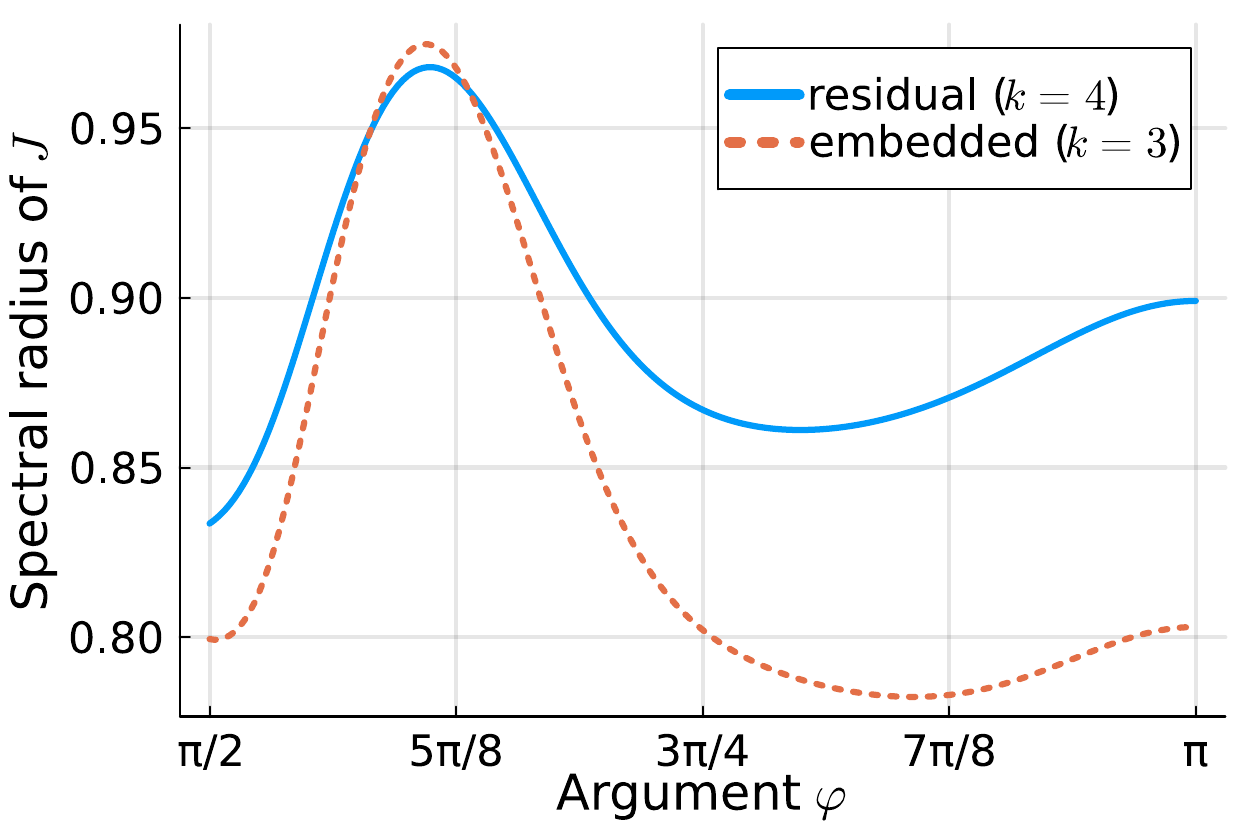}
    \caption{PI controller with $\beta = (0.6, -0.2, 0)$,
             corresponding to the Jacobians
             \eqref{eq:jacobian-BS3-PI-residual} and
             \eqref{eq:jacobian-PID-embedded}.}
  \end{subfigure}%
  \caption{Spectral radius of the Jacobian $J$ for error estimates based on
           the $L^1$ residual and the second-order embedded method for the
           third-order method of Bogacki and Shampine \cite{bogacki1989a32}
           with left-biased cubic Hermite interpolation.
           The Jacobian is evaluated at $z = r \e^{\i \phi}$ where the radius
           $r$ is chosen such that $z$ is on the boundary of the stability
           region of the (main) method.}
  \label{fig:spectral_radius_bs3}
\end{figure}

The spectral radii of the Jacobians are visualized in
Figure~\ref{fig:spectral_radius_bs3}.
Again, all methods do not lead to step size control stability.

The expression of the Jacobian \eqref{eq:jacobian-PID} of the PI controller
with residual error estimator becomes
\begin{equation}
\label{eq:jacobian-BS3-PI-residual}
  J =
  \begin{pmatrix}
    1 & \mu & 0 & 0 \\
    -\frac{\beta_1}{k} & 1 - \frac{4 \beta_1}{k} &
      -\frac{\beta_2}{k} & -\frac{4 \beta_2}{k} \\
    1 & 0 & 0 & 0 \\
    0 & 1 & 0 & 0
  \end{pmatrix},
  \quad
  \mu = \Re\biggl( \frac{z + z^2 + z^3 / 2}{1 + z + z^2/2 + z^3 / 6} \biggr).
\end{equation}
The spectral radii of the Jacobians for the PI controller with parameters
$\beta = (0.6, -0.2, 0.0)$ recommended in \cite{ranocha2021optimized} are
visualized in Figure~\ref{fig:spectral_radius_bs3}. Clearly, the more
involved controller leads to step size control stability for all approaches.

\subsubsection{Central cubic Hermite interpolation}

Recall the Jacobian \eqref{eq:jacobian-I} of the I controller system.
For the $L^1$ residual error estimator with central cubic Hermite
interpolation, the Jacobian is
\begin{equation}
\label{eq:jacobian-BS3-I-residual-central}
  J =
  \begin{pmatrix}
    1 & \mu \\
    -\frac{1}{k} &
      1 - \frac{1}{k} \partial_{\chi_{n}} \log|e_{n+1}|
  \end{pmatrix},
  \qquad
  \mu = \Re\biggl( \frac{R'(z)}{R(z)} z \biggr),
\end{equation}
where $\eta_n = \log|u^n|$, $\chi_n = \log \dt_n$, and $z = \lambda \dt_n$.
We have not been able to compute the expression $\partial_{\chi_n} |e_{n+1}|$
analytically in this case. However, we can evaluate it numerically by using
an adaptive Gauss-Kronrod quadrature with relative tolerance $10^{-8}$ and
absolute tolerance $10^{-14}$ implemented in QuadGK.jl \cite{johnson2013quadgk}
for the integrals
\begin{equation}
\begin{aligned}
  |e_{n+1}|
  &=
  \int_{0}^{\dt_n} h(t) \dif t,
  \qquad
  h(t) := \frac{1}{6} t (\dt_n - t) \bigl|t - 2 \dt_n + t \dt_n \lambda\bigr| |\lambda|^4 |u^n|,
  \\
  \partial_{\chi_n} |e_{n+1}|
  &=
  \left(
    h(\dt_n) + \int_{0}^{\dt_n} \frac{\partial h(t)}{\partial \dt_n} \dif t
  \right) \dt_n,
\end{aligned}
\end{equation}
where the derivatives are evaluated using ForwardDiff.jl
\cite{revels2016forward}.

\begin{figure}[htb]
\centering
  \begin{subfigure}{0.49\textwidth}
    \includegraphics[width=\textwidth]{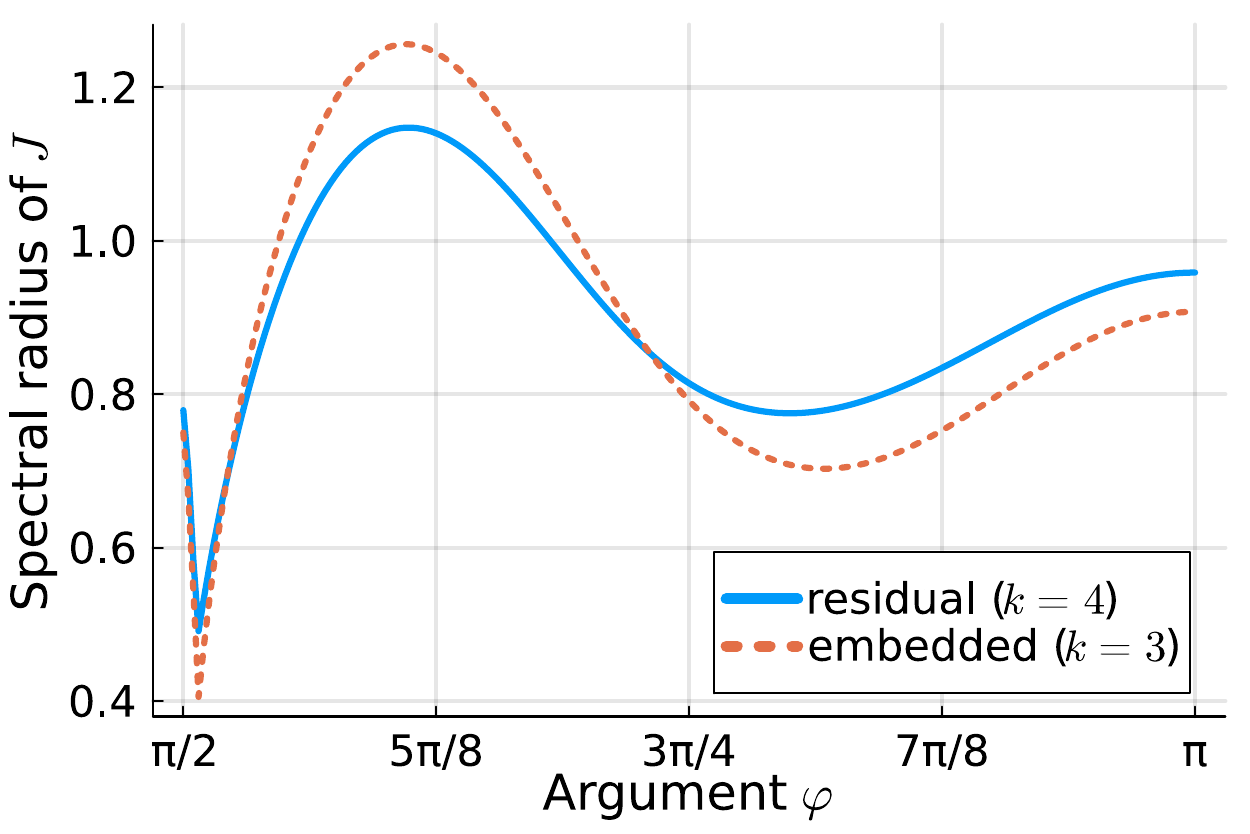}
    \caption{I controller with $\beta = (1, 0, 0)$,
             corresponding to the Jacobians
             \eqref{eq:jacobian-BS3-I-residual-central} and
             \eqref{eq:jacobian-PID-embedded}.}
  \end{subfigure}%
  \hspace*{\fill}
  \begin{subfigure}{0.49\textwidth}
    \includegraphics[width=\textwidth]{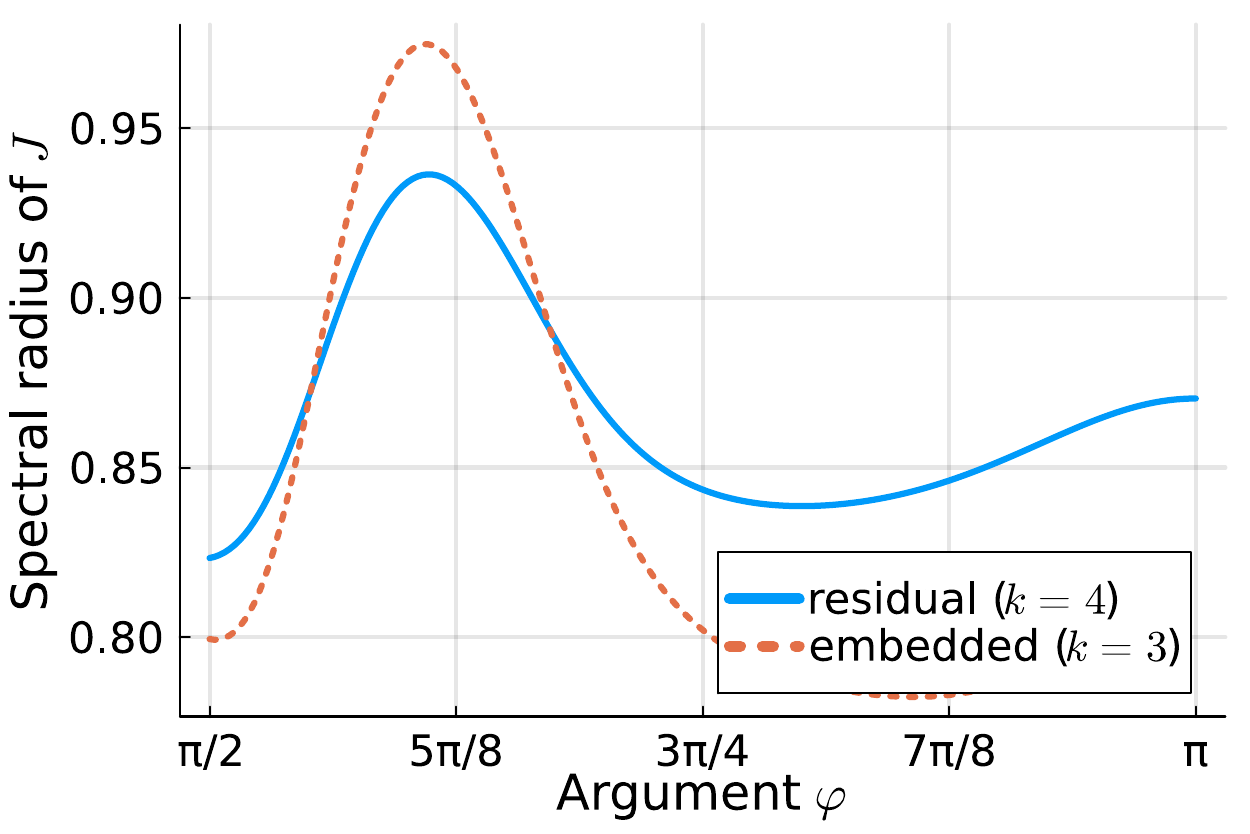}
    \caption{PI controller with $\beta = (0.6, -0.2, 0)$,
             corresponding to the Jacobians
             \eqref{eq:jacobian-BS3-PI-residual-central} and
             \eqref{eq:jacobian-PID-embedded}.}
  \end{subfigure}%
  \caption{Spectral radius of the Jacobian $J$ for error estimates based on
           the $L^1$ residual and the second-order embedded method for the
           third-order method of Bogacki and Shampine \cite{bogacki1989a32}
           with central cubic Hermite interpolation.
           The Jacobian is evaluated at $z = r \e^{\i \phi}$ where the radius
           $r$ is chosen such that $z$ is on the boundary of the stability
           region of the (main) method.}
  \label{fig:spectral_radius_bs3_quadrature}
\end{figure}

The spectral radii of the Jacobians are visualized in
Figure~\ref{fig:spectral_radius_bs3_quadrature}.
Again, both methods do not lead to step size control stability for the simple
I controller.
It is interesting to see that the general trend of the spectral radius
is similar to the one computed for the left-biased cubic Hermite interpolation
shown in Figure~\ref{fig:spectral_radius_bs3}. However, the spectral radius does
not exceed unity near $\phi = \pi$ for the central interpolation (while still
being close to unity). This is in accordance with numerical results presented
later in Section~\ref{sec:krogh}.

Using the same test problem \eqref{eq:test-problem-hairer-wanner} as for
second-order, two-stage methods results in
1318 accepted and 120 rejected steps
for the the embedded approach while the residual-based approach leads to
1327 accepted and 21 rejected steps,
see Table~\ref{tab:BS3}.
This is in accordance with the spectral radius of the residual-based approach
exceeding unity less than the embedded approach.

The expression of the Jacobian \eqref{eq:jacobian-PID} of the PI controller
with residual error estimator becomes
\begin{equation}
\label{eq:jacobian-BS3-PI-residual-central}
  J =
  \begin{pmatrix}
    1 & \mu & 0 & 0 \\
    -\frac{\beta_1}{k} & 1 - \frac{\beta_1}{k} \frac{\partial |e_{n+1}|}{\partial \chi_{n}}  &
      -\frac{\beta_2}{k} & -\frac{\beta_2}{k} \frac{\partial |e_{n}|}{\partial \chi_{n-1}}  \\
    1 & 0 & 0 & 0 \\
    0 & 1 & 0 & 0
  \end{pmatrix},
  \quad
  \mu = \Re\biggl( \frac{z + z^2 + z^3 / 2}{1 + z + z^2/2 + z^3 / 6} \biggr).
\end{equation}
The spectral radii of the Jacobians for the PI controller with parameters
$\beta = (0.6, -0.2, 0.0)$ recommended in \cite{ranocha2021optimized} are
visualized in Figure~\ref{fig:spectral_radius_bs3_quadrature}. Clearly, the
more involved controller leads to step size control stability for all
approaches.
For the test problem \eqref{eq:test-problem-hairer-wanner}, we get
1330 accepted and 1 rejected steps
for the the embedded approach while the residual-based approach leads to
1333 accepted and 1 rejected steps.

Finally, we are able to compute the $L^2$ error estimate for the
central cubic Hermite interpolation \eqref{eq:cubic-hermite-central}
analytically, resulting in
\begin{equation}
  |e_{n+1}|
  =
  \sqrt{\dt_n} \| R \|_{L^2(t^n, t^{n+1})}
  =
  \frac{1}{6 \sqrt{105}} \dt_n^4 |\lambda|^4 |u^{n}|
    \sqrt{8 + \dt^2 |\lambda|^2 - 5 \dt_n \Re(\lambda)}.
\end{equation}
Then, the expression of the Jacobian \eqref{eq:jacobian-I} of the I controller
with $L^2$ residual error estimator becomes
\begin{equation}
\label{eq:jacobian-BS3-I-residual-central-L2}
\begin{gathered}
  J =
  \begin{pmatrix}
    1 & \mu \\
  -\frac{1}{k} & J_{22} \\
  \end{pmatrix},
  \\
  \mu = \Re\biggl( \frac{z + z^2 + z^3 / 2}{1 + z + z^2/2 + z^3 / 6} \biggr),
  \;
  J_{22} = 1 - \frac{64 + 10 |z|^2 - 45 \Re(z)}{2 k \bigl( 8 + |z|^2 - 5 \Re(z) \bigr)}.
\end{gathered}
\end{equation}

\begin{figure}[htb]
\centering
  \begin{subfigure}{0.49\textwidth}
    \includegraphics[width=\textwidth]{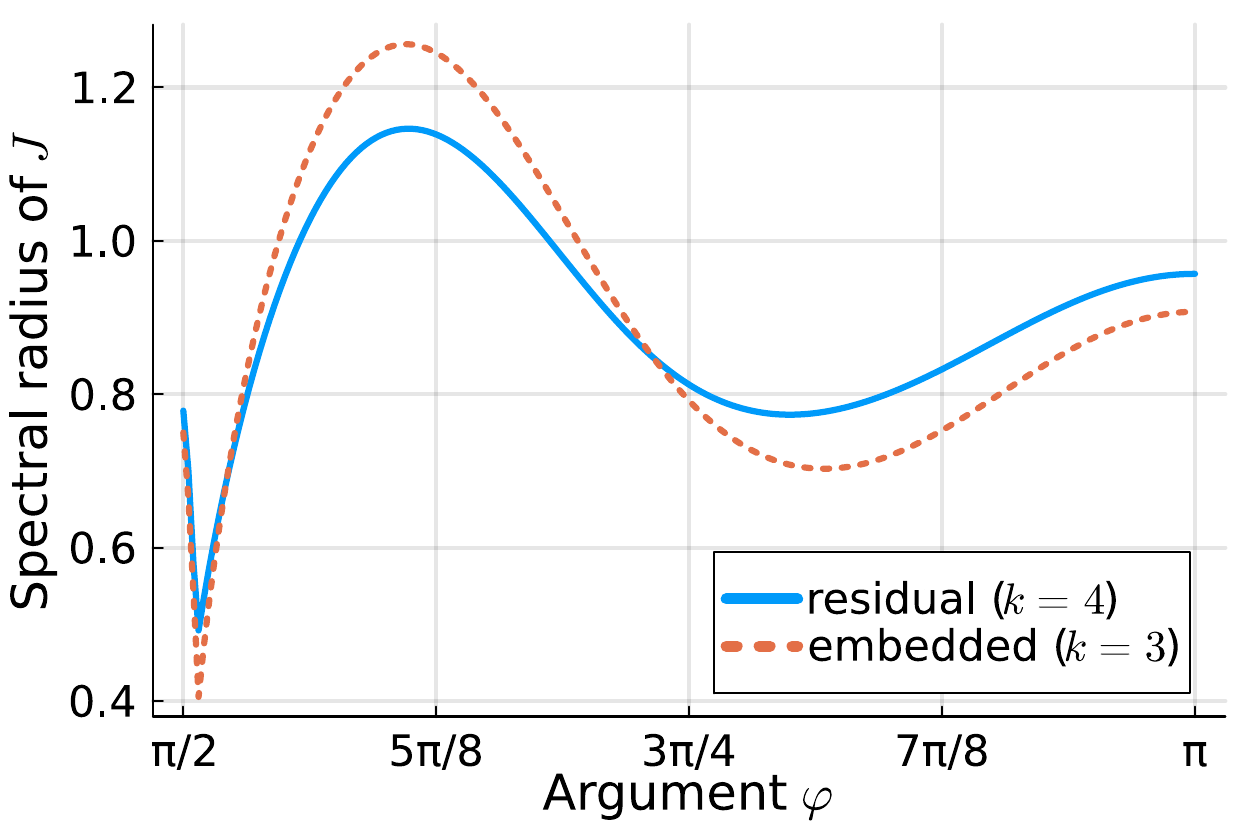}
    \caption{I controller with $\beta = (1, 0, 0)$,
             corresponding to the Jacobians
             \eqref{eq:jacobian-BS3-I-residual-central-L2} and
             \eqref{eq:jacobian-PID-embedded}.}
  \end{subfigure}%
  \hspace*{\fill}
  \begin{subfigure}{0.49\textwidth}
    \includegraphics[width=\textwidth]{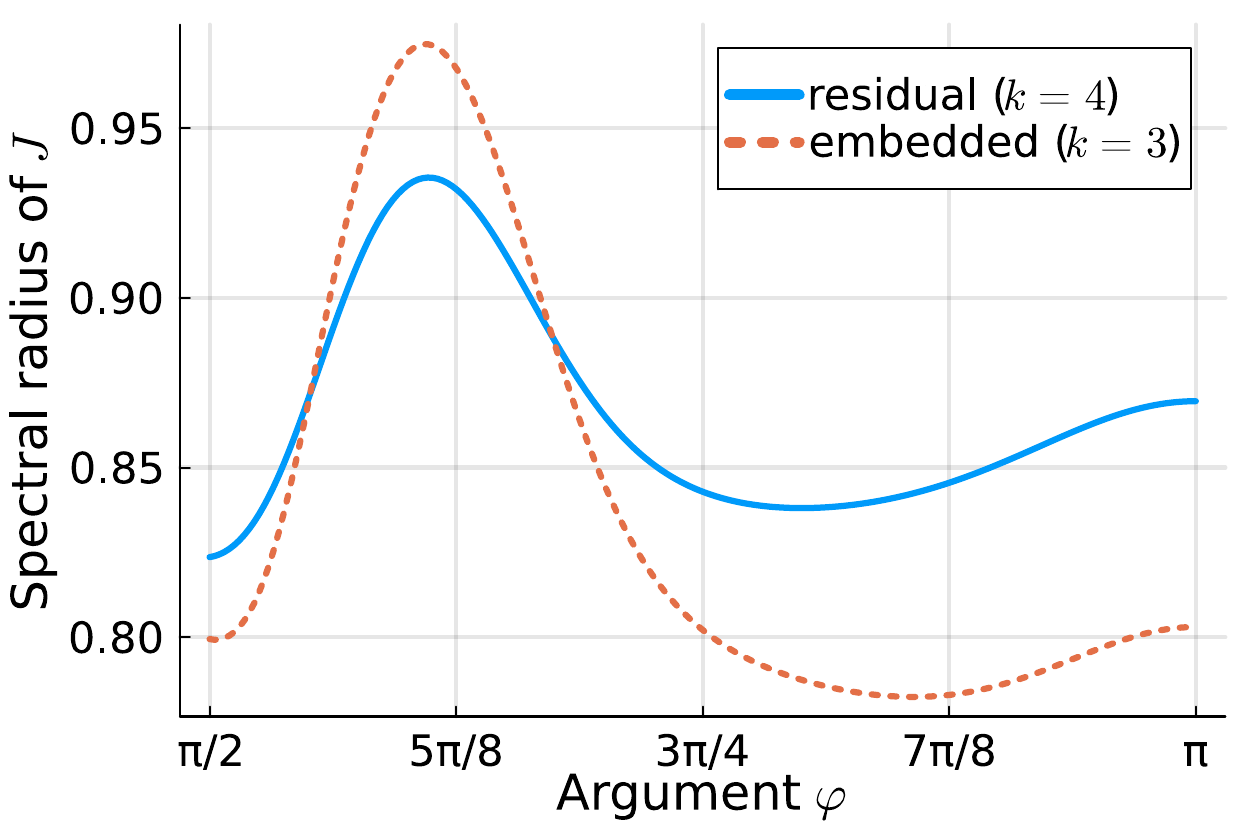}
    \caption{PI controller with $\beta = (0.6, -0.2, 0)$,
             corresponding to the Jacobians
             \eqref{eq:jacobian-BS3-PI-residual-central-L2} and
             \eqref{eq:jacobian-PID-embedded}.}
  \end{subfigure}%
  \caption{Spectral radius of the Jacobian $J$ for error estimates based on
           the $L^2$ residual and the second-order embedded method for the
           third-order method of Bogacki and Shampine \cite{bogacki1989a32}
           with central cubic Hermite interpolation.
           The Jacobian is evaluated at $z = r \e^{\i \phi}$ where the radius
           $r$ is chosen such that $z$ is on the boundary of the stability
           region of the (main) method.}
  \label{fig:spectral_radius_bs3_l2}
\end{figure}

The spectral radii of the Jacobians are visualized in
Figure~\ref{fig:spectral_radius_bs3_l2}. The spectral radii of the
$L^2$ error estimator with central cubic Hermite interpolation
\eqref{eq:cubic-hermite-central} exceed unity less than the spectral radii
of the $L^1$ error estimator \eqref{eq:weighted-estimate-residual-L1} with
left-biased cubic Hermite interpolation \eqref{eq:cubic-hermite-left}. However,
they still do not lead to step size control stability.

The expression of the Jacobian \eqref{eq:jacobian-PID} of the PI controller
with $L^2$ residual error estimator becomes
\begin{equation}
\label{eq:jacobian-BS3-PI-residual-central-L2}
\begin{gathered}
  J =
  \begin{pmatrix}
    1 & \mu & 0 & 0 \\
    -\frac{\beta_1}{k} & 1 - \beta_1 \alpha &
      -\frac{\beta_2}{k} & -\beta_2 \alpha \\
    1 & 0 & 0 & 0 \\
    0 & 1 & 0 & 0
  \end{pmatrix},
  \\
  \mu = \Re\biggl( \frac{z + z^2 + z^3 / 2}{1 + z + z^2/2 + z^3 / 6} \biggr),
  \;
  \alpha = \frac{64 + 10 |z|^2 - 45 \Re(z)}{2 k (8 + |z|^2 - 5 \Re(z)}.
\end{gathered}
\end{equation}
The spectral radii of the Jacobians for the PI controller with parameters
$\beta = (0.6, -0.2, 0.0)$ recommended in \cite{ranocha2021optimized} are
visualized in Figure~\ref{fig:spectral_radius_bs3_l2}. Clearly, the more
involved controller leads to step size control stability for all approaches.
Again, the $L^2$ error estimator with central cubic Hermite interpolation
leads to more damping around $\phi = 5 \pi / 8$ than the $L^1$ version with
left-biased interpolation.

Using the $L^2$ error estimator with central cubic Hermite interpolation
but otherwise the same setup as before, we get
1326 accepted and 25 rejected steps for the I controller and
1333 accepted and 1 rejected steps for the PI controller,
see Table~\ref{tab:BS3}.

\begin{table}[htbp]
\centering
  \caption{Number of accepted and rejected time steps of
           the Runge-Kutta pair of Bogacki and Shampine \cite{bogacki1989a32}
           for the test problem \eqref{eq:test-problem-hairer-wanner} with
           tolerances $\atol = \rtol = 10^{-4}$ using the central cubic
           Hermite interpolation for the residual error estimators.}
  \label{tab:BS3}
  \begin{tabular*}{\linewidth}{@{\extracolsep{\fill}}c *3c *3c}
    \toprule
    & \multicolumn{3}{c}{I controller, $\beta = (1, 0, 0)$}
    & \multicolumn{3}{c}{PI controller, $\beta = (0.6, -0.2, 0)$}
    \\
    & \multicolumn{2}{c}{Residual Estimator}
    & \multicolumn{1}{c}{Embedded}
    & \multicolumn{2}{c}{Residual Estimator}
    & \multicolumn{1}{c}{Embedded}
    \\
    & \multicolumn{1}{c}{$L^1$}
    & \multicolumn{1}{c}{$L^2$}
    & \multicolumn{1}{c}{Method}
    & \multicolumn{1}{c}{$L^1$}
    & \multicolumn{1}{c}{$L^2$}
    & \multicolumn{1}{c}{Method}
    \\
    \midrule
    $k$ & 4 & 4 & 3
        & 4 & 4 & 3
    \\
    Accepted & 1327 & 1326 & 1318
             & 1333 & 1333 & 1330
    \\
    Rejected & 21 & 25 & 120
             & 1 & 1 & 1
    \\
    \bottomrule
  \end{tabular*}
\end{table}

\section{Numerical experiments}
\label{sec:numerical_experiments}

We have implemented all methods in Julia \cite{bezanson2017julia} and
use OrdinaryDiffEq.jl \cite{rackauckas2017differentialequations} for
the classical schemes with embedded methods. We use an adaptive Gauss-Kronrod
quadrature implemented in QuadGK.jl \cite{johnson2013quadgk} to compute the
integrals appearing in the residual error estimators; we set the relative
error tolerance to $10^{-8}$ for the ODE tests and to $10^{-6}$ for
the tests involving partial differential equations (PDEs).
The absolute and relative tolerances of the step size controller are equal,
$\atol = \rtol = \tol$. We use an adaptation of the algorithm of
\cite[Section~II.4]{hairer2008solving} to determine the initial time step size.
We use FFTW.jl \cite{frigo2005design} via the interface provided by
SummationByPartsOperators.jl \cite{ranocha2021sbp} for Fourier collocation
methods and Trixi.jl \cite{ranocha2022adaptive,schlottkelakemper2021purely}
for discontinuous Galerkin discretizations of conservation laws.
Finally, we use Plots.jl \cite{christ2023plots} to visualize the results.
We use \EPS control for all experiments.
All source code required to reproduce the numerical experiments is available
in our reproducibility repository \cite{ranocha2023stabilityRepro}.

First, we test the step size control stability theory with a nonlinear
ODE in Section~\ref{sec:krogh}. Thereafter, we study the methods in the two
regimes important for step size control of explicit time integration schemes:
the asymptotic regime of small time step sizes and the stability-limited regime.
We choose a classical ODE problem (Section~\ref{sec:rigidbody}) and a
non-stiff PDE (Section~\ref{sec:bbm}) for the asymptotic regime. Afterwards,
we consider hyperbolic conservation laws to study the stability-limited regime
in Sections~\ref{sec:linadv} and \ref{sec:euler}.

In all cases, we just show numerical results for the $L^1$ residual error
estimates. The corresponding results based on $L^2$ error estimates are
very similar (and can also be reproduced using the code of our
reproducibility repository \cite{ranocha2023stabilityRepro}).

\subsection{A nonlinear ODE}
\label{sec:krogh}

First, we follow \cite{higham1990embedded} and consider the nonlinear test
problem
\begin{equation}
\label{eq:krogh}
  u' = -B u + U^T (z_1^2 /2 - z_2^2 / 2, z_1 z_2, z_3^2, z_4^2)^T,
  \quad
  u(0) = (0, -2, -1, -1)^T,
\end{equation}
of Krogh \cite{krogh1973testing} with
\begin{equation}
  z = U u,
  \quad
  B = U^T \begin{pmatrix}
            -10 \cos(\phi) & - 10 \sin(\phi) & 0 & 0 \\
            10 \sin(\phi) & -10 \cos(\phi) & 0 & 0 \\
            0 & 0 & 1 & 0 \\
            0 & 0 & 0 & 1 / 2
          \end{pmatrix} U,
\end{equation}
where $U \in \R^{4 \times 4}$ contains $-1/2$ on the diagonal and $+1 / 2$
in all other components. For a fixed parameter $\phi$, the dominant eigenvalues
of the Jacobian for $t \to \infty$ become
$-10 |\cos(\phi)| \pm \i 10 \sin(\phi)$.

\begin{figure}[htbp]
\centering
  \begin{subfigure}{0.49\textwidth}
  \centering
   \includegraphics[width=\textwidth]{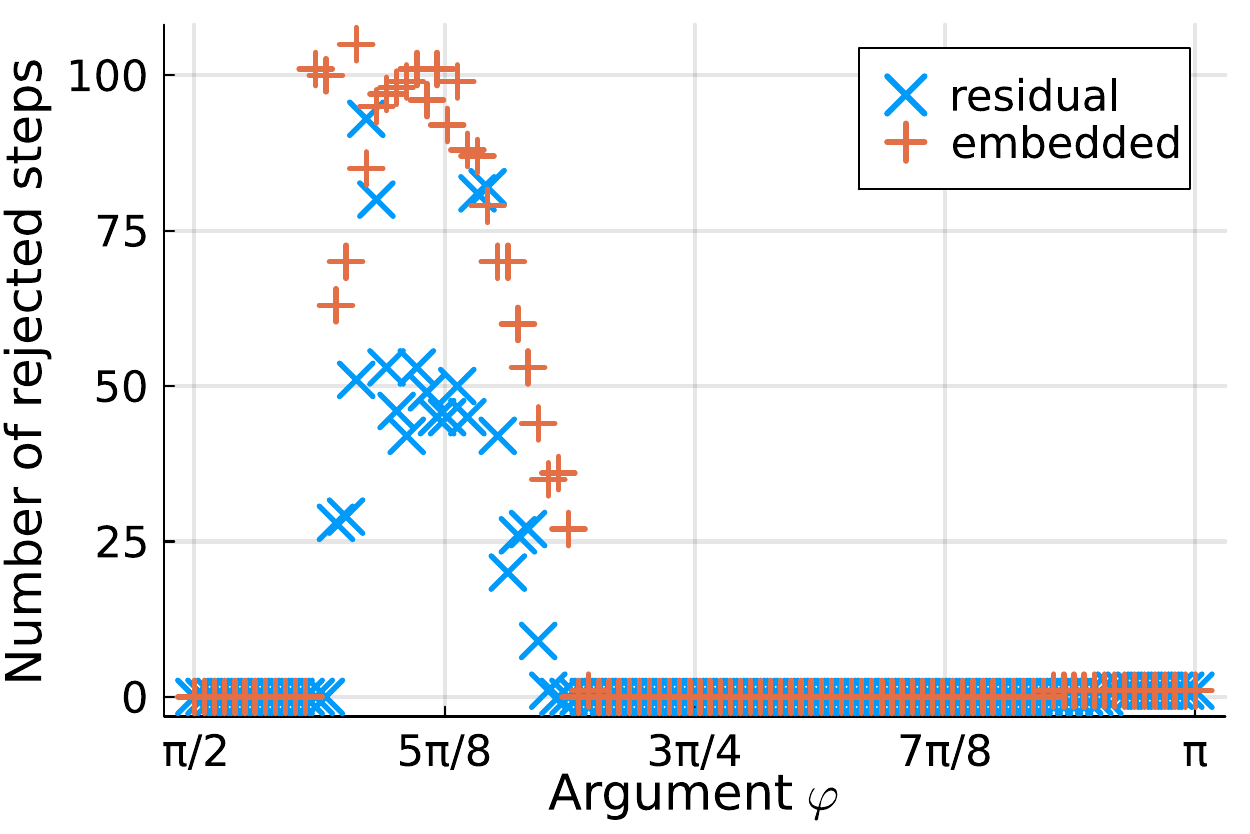}
   \caption{I controller, $\beta = (1, 0)$.}
  \end{subfigure}%
  \vspace*{\fill}
  \begin{subfigure}{0.49\textwidth}
  \centering
   \includegraphics[width=\textwidth]{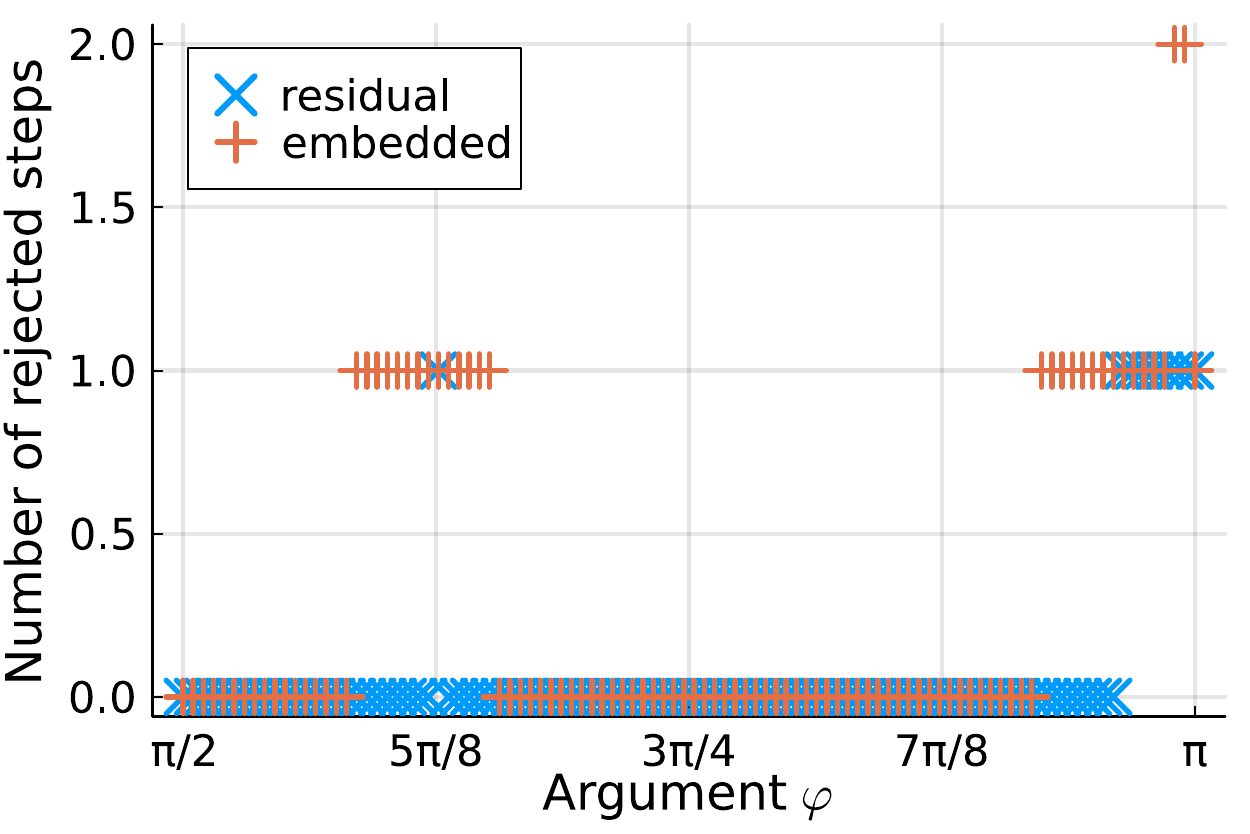}
   \caption{PI controller, $\beta = (0.6, -0.2)$.}
  \end{subfigure}%
  \caption{Number of rejected steps for the nonlinear ODE \eqref{eq:krogh}
           using the the third-order method of
           Bogacki and Shampine \cite{bogacki1989a32} with
           tolerance $\tol = 10^{-4}$.}
  \label{fig:krogh}
\end{figure}

We integrate the problem in the time interval $[0, 100]$ using the
third-order method of Bogacki and Shampine \cite{bogacki1989a32}
with both the embedded method and the $L^1$ residual error estimator.
The number of rejected steps for the simple I controller and the
PI controller given by $\beta = (0.6, -0.2)$ are shown in
Figure~\ref{fig:krogh}. Clearly, a significant number of steps is
rejected when the parameter $\phi$ is in the region around $\phi = 5 \pi / 8$
where the simple I controller is not stable. In contrast, the advanced PI
controller leads to at most one or two rejected steps for all values
of $\phi$.

\subsection{Euler equations of a rigid body}
\label{sec:rigidbody}

We consider the Euler equations of a rigid body with parameters used by
Krogh \cite{krogh1973testing}, i.e.,
\begin{equation}
  u'(t) = \begin{pmatrix} u_2 u_3 \\ -u_1 u_3 \\ -0.51 u_1 u_2 \end{pmatrix},
  \quad
  u(0) = \begin{pmatrix} 0 \\ 1 \\ 1 \end{pmatrix}.
\end{equation}
The solution is periodic with periodic given by Krogh \cite{krogh1973testing}.
We compute the $\ell^2$ error after one period.

\begin{figure}[htb]
\centering
  \begin{subfigure}{0.49\textwidth}
  \centering
   \includegraphics[width=\textwidth]{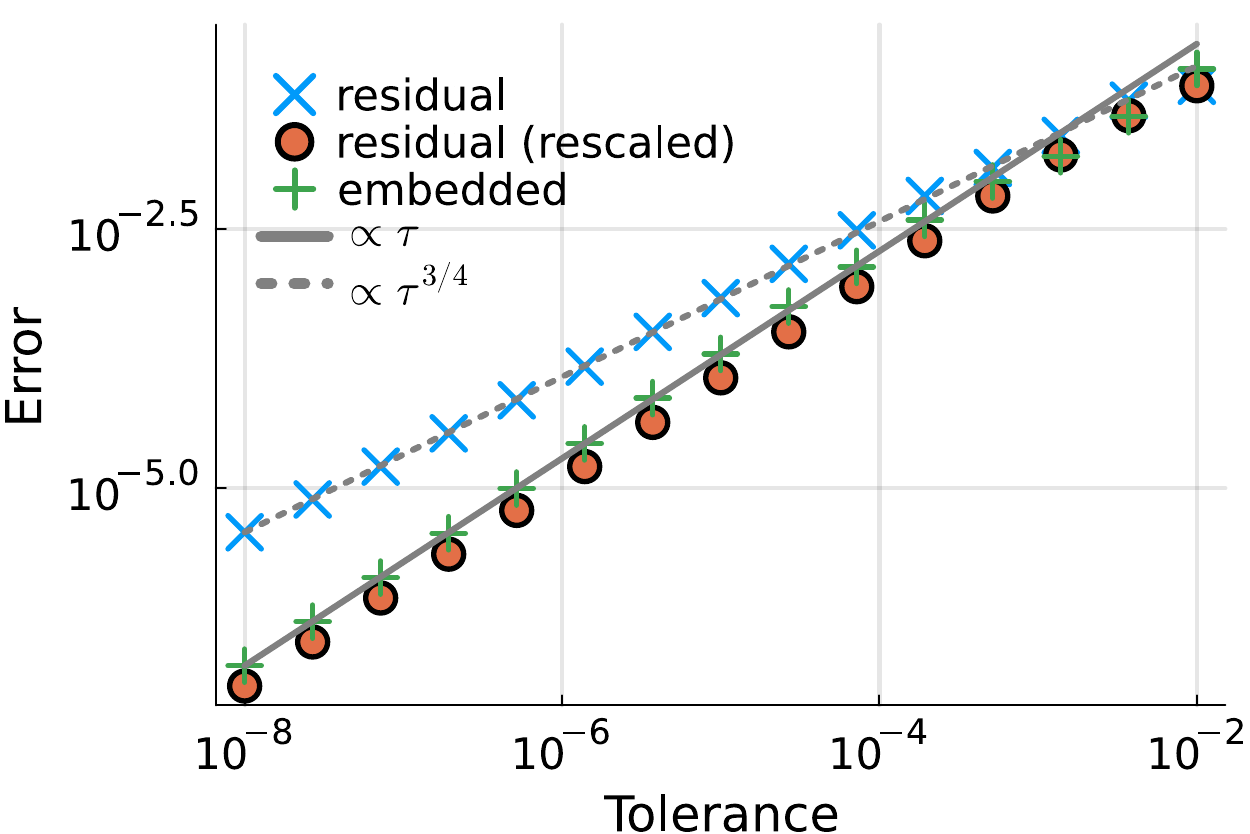}
    \caption{Error for different tolerances.}
  \end{subfigure}%
  \hspace*{\fill}
  \begin{subfigure}{0.49\textwidth}
  \centering
   \includegraphics[width=\textwidth]{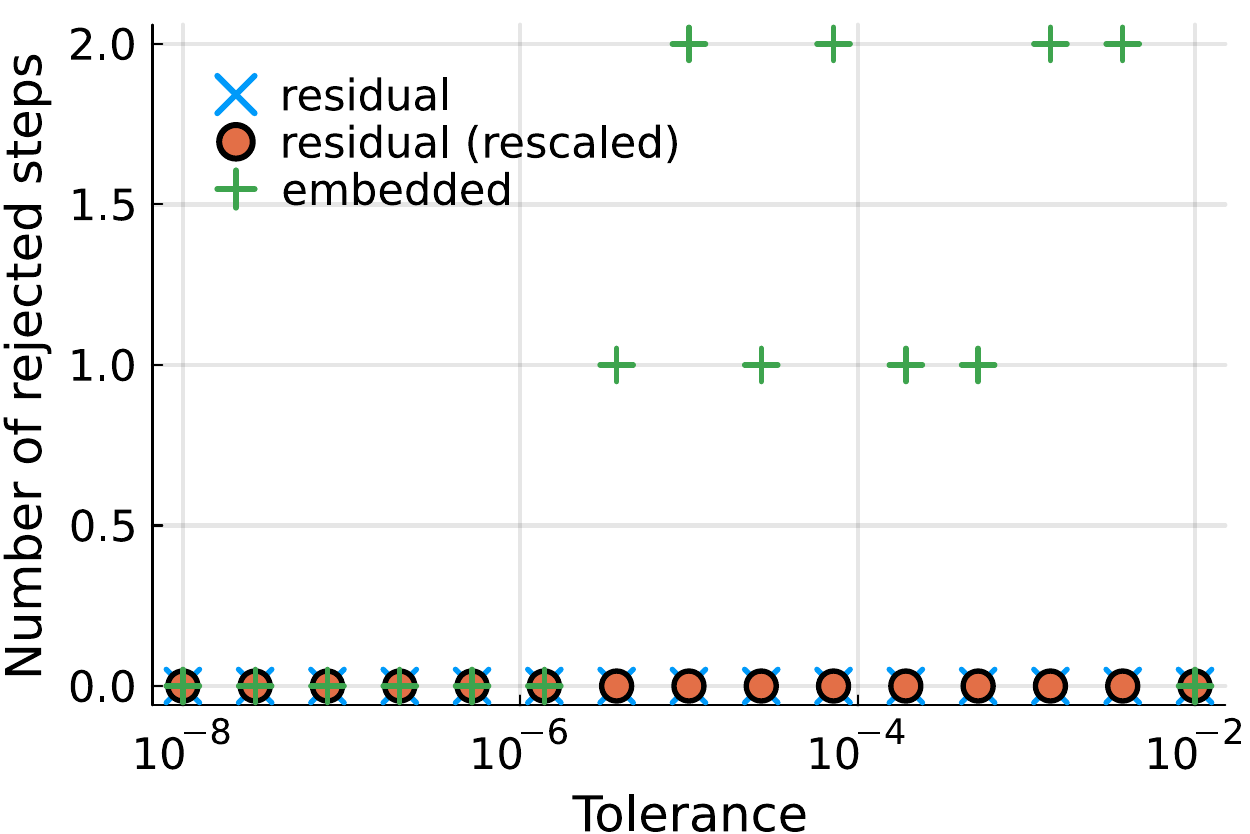}
    \caption{Number of rejected steps.}
  \end{subfigure}%
  \\
  \begin{subfigure}{0.49\textwidth}
  \centering
   \includegraphics[width=\textwidth]{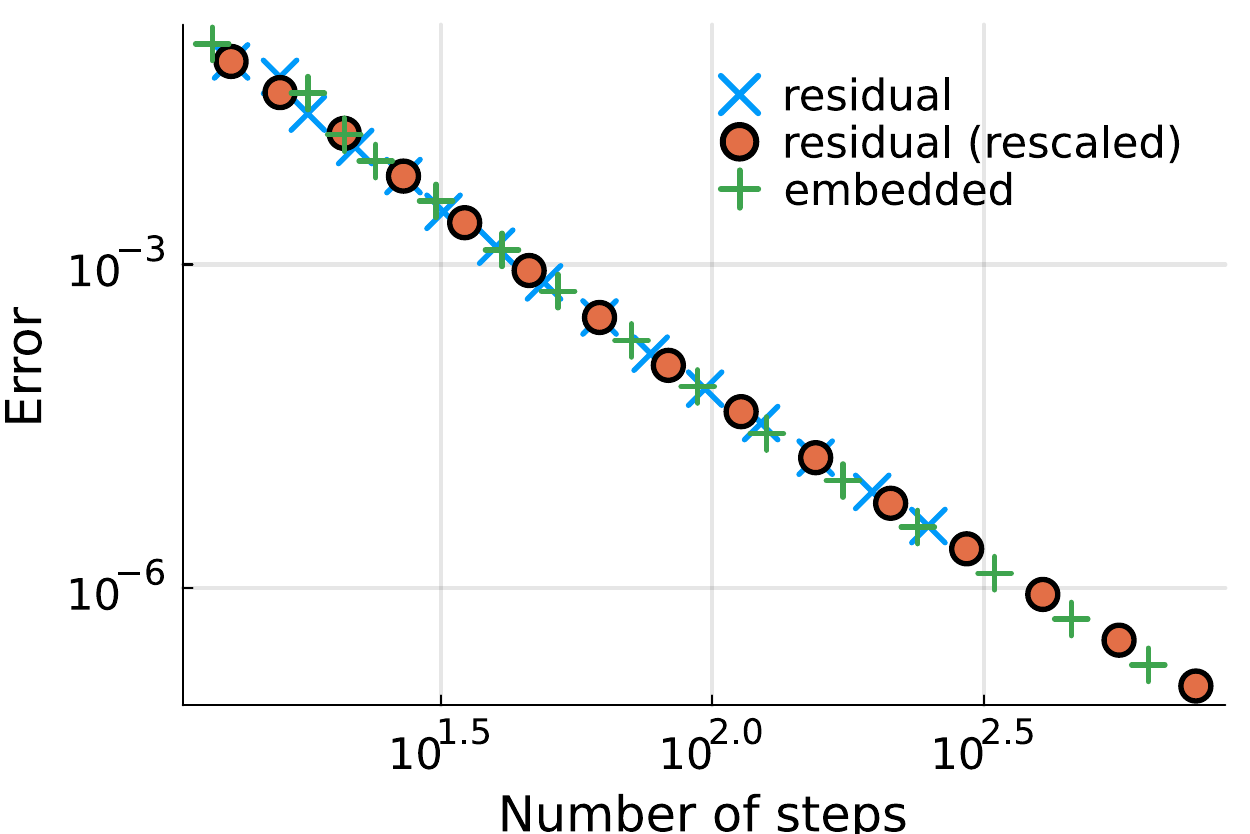}
    \caption{Work precision diagram.}
  \end{subfigure}%
  \caption{Results for the Euler equations of a rigid body using the
           third-order method of Bogacki and Shampine \cite{bogacki1989a32}
           with PI controller parameters $\beta = (0.6, -0.2)$.
           Since all results use an \EPS control, we also rescale the
           tolerance to achieve tolerance proportionality for the residual-based
           approach.}
  \label{fig:rigidbody}
\end{figure}

Numerical results obtained by the third-order method of Bogacki and Shampine
\cite{bogacki1989a32} with both the embedded method and the $L^1$ residual
error estimator are shown in Figure~\ref{fig:rigidbody}.
First, there are no issues with step rejections for this problem. Indeed,
the residual-based approach leads to no step rejections and the embedded method
rejects at most one or two steps for a few tolerances.

Next, we can observe the expected tolerance proportionality for the
embedded method with \EPS control. We also observe the expected scaling of
the global error as $\tau^{p / (p + 1)} = \tau^{3 / 4}$ for a given
tolerance $\tau$ and the residual-based
approach with \EPS control. Thus, we can achieve tolerance proportionality
by rescaling the tolerances as demonstrated in Figure~\ref{fig:rigidbody}.
Nevertheless, the residual-based approach with \EPS control leads to
tolerance convergence, even without rescaling.

Furthermore, the efficiency measured as the error for a fixed number of
(accepted plus rejected) steps is the same for all variants.
Indeed, all approaches lead to the same behavior in a classical
work precision diagram. Thus, both approaches lead to
\emph{tolerance convergence} (the error goes to zero for $\tol \to 0$) and
\emph{computational stability} (small changes in the tolerance lead to small
changes of the numerical results), see \cite{soderlind2002automatic} for a
discussion of these properties.

\subsection{1D Benjamin-Bona-Mahony equation}
\label{sec:bbm}

Next, we consider the Benjamin-Bona-Mahony (BBM) equation
\cite{benjamin1972model} (also known as regularized long wave equation)
\begin{equation}
\label{eq:bbm}
\begin{aligned}
  (\I - \partial_x^2) \partial_t u(t,x)
  + \partial_x \frac{u(t,x)^2}{2}
  + \partial_x u(t,x)
  &= 0,
  \\
  u(0, x) &= u^0(x),
\end{aligned}
\end{equation}
with periodic boundary conditions as a model of nonlinear dispersive wave
equations. We use the Fourier collocation semidiscretization of
\cite{ranocha2021broad} conserving discrete versions of the linear and
quadratic invariants
\begin{equation}
  \int u(t, x) \dif x,
  \quad
  \int \bigl( u(t,x)^2 + \bigl( \partial_x u(t,x) \bigr)^2 \bigr) \dif x.
\end{equation}
Due to the dispersive term $\partial_x^2 \partial_t u$ with mixed space and
time derivatives, the semidiscretization yields a non-stiff ODE with CFL
restriction of the form $\dt \lesssim \mathrm{const}$.

We consider the traveling wave solution
\begin{equation}
\label{eq:bbm-traveling-wave}
  u(t,x) = A \cosh( K (x - c t) ),
  \quad A = 3 (c - 1),
  \quad K = \frac{1}{2} \sqrt{1 - 1 / c},
\end{equation}
with speed $c = 1.2$ in the periodic domain $[-90, 90]$ and integrate the
semidiscretization with the third-order method of Bogacki and Shampine
\cite{bogacki1989a32} in a time interval big enough so that the wave traverses
the domain a bit more than once.

\begin{figure}[htb]
\centering
  \begin{subfigure}{0.49\textwidth}
  \centering
   \includegraphics[width=\textwidth]{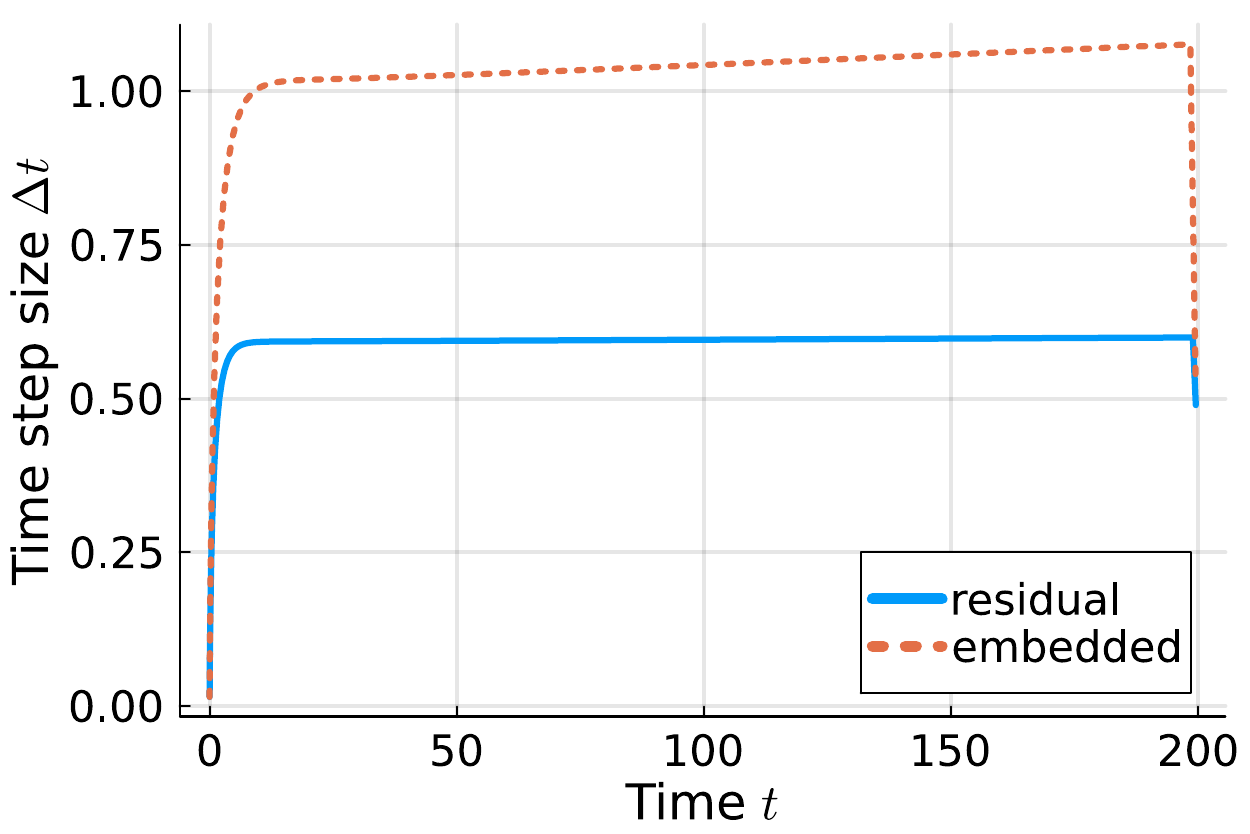}
    \caption{Time step size with tolerance $\tol = 10^{-4}$.}
    \label{fig:bbm_dt_vs_t}
  \end{subfigure}%
  \hspace*{\fill}
  \begin{subfigure}{0.49\textwidth}
  \centering
   \includegraphics[width=\textwidth]{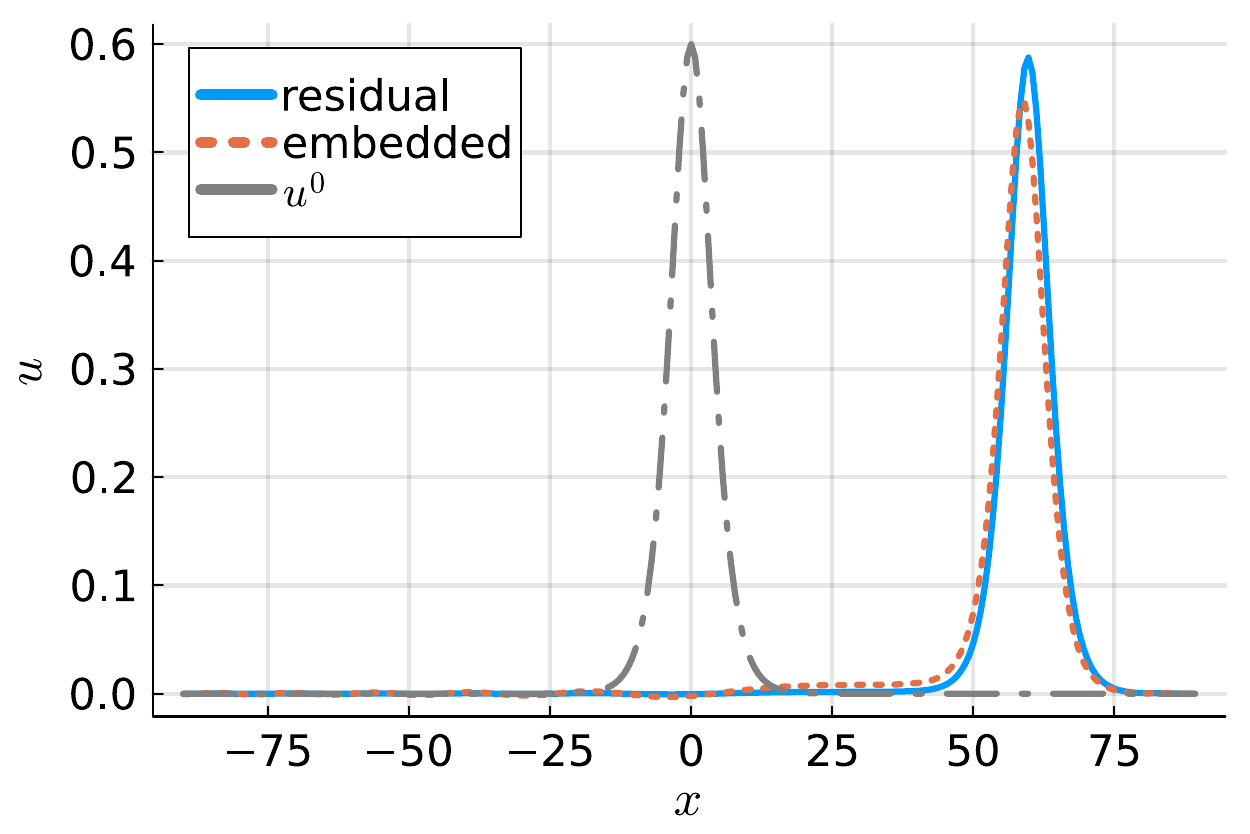}
    \caption{Numerical solutions with $\tol = 10^{-4}$.}
    \label{fig:bbm_solutions}
  \end{subfigure}%
  \\
  \begin{subfigure}{0.49\textwidth}
  \centering
   \includegraphics[width=\textwidth]{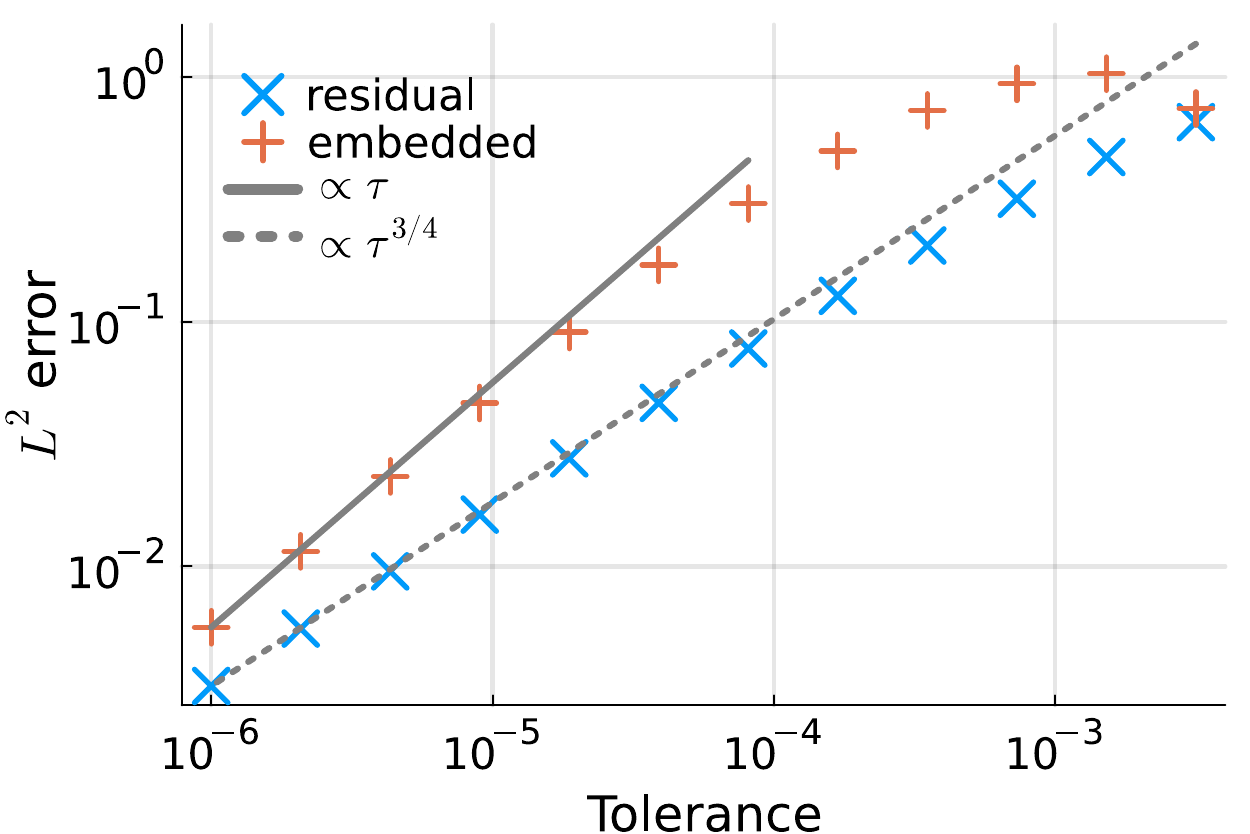}
    \caption{Discrete $L^2$ error for different tolerances.}
  \end{subfigure}%
  \hspace*{\fill}
  \begin{subfigure}{0.49\textwidth}
  \centering
   \includegraphics[width=\textwidth]{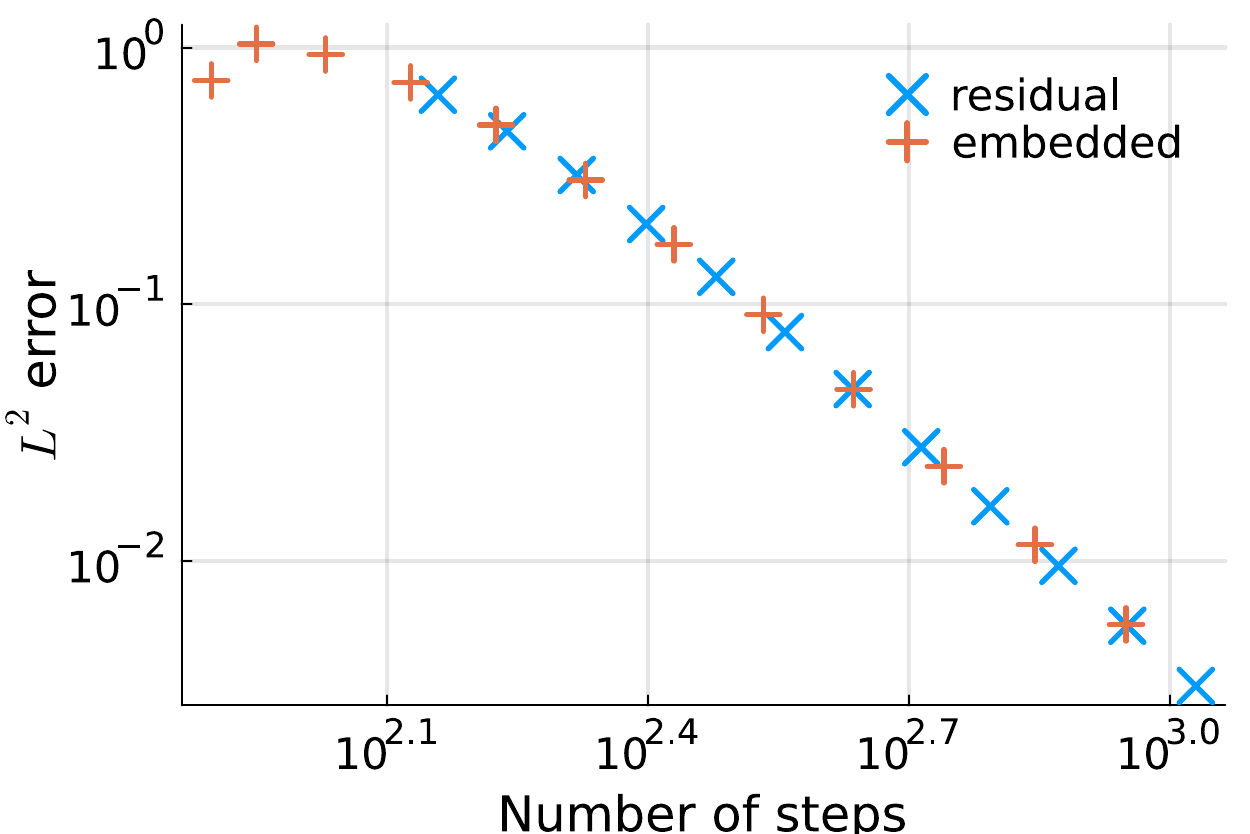}
    \caption{Work precision diagram.}
  \end{subfigure}%
  \caption{Results for the BBM equation with Fourier semidiscretization
           in space and the third-order method of
           Bogacki and Shampine \cite{bogacki1989a32} in time with
           PI controller parameters $\beta = (0.6, -0.2)$.}
  \label{fig:bbm}
\end{figure}

The results are visualized in Figure~\ref{fig:bbm}. The residual-based approach
leads to the expected behavior of a short transient period and a constant time
step size afterwards. The embedded method behaves similarly but leads to a
slowly growing time step size. The reason for this appears to be that it takes
slightly bigger time steps, leading to more dissipation of the numerical
solution. This in turn allows to take even bigger time steps, leading to
a visibly more dissipated numerical solution.

Both methods lead to an expected convergence behavior for stricter tolerances.
In particular, we get the tolerance proportionality for the embedded method
and a scaling of the global error as $\tau^{p / (p + 1)} = \tau^{3 / 4}$
for the residual-based approach with \EPS control.
Moreover, both methods lead to the same behavior in a
work precision diagram measuring the discrete $L^2$ error at the final time
for a given number of steps --- while both methods lead to no rejected steps
in this case.

\subsection{Reliability of the global error estimate}
\label{sec:one-sided-lipschitz}

Next, we study the reliability of the global a posteriori error estimate.
We consider the linear equation
\begin{equation}
\label{eq:one-sided-lipschitz-linear}
  u'(t) = u(t),
  \;
  t \in (0, 1),
  \qquad
  u(0) = 1,
\end{equation}
with (one-sided) Lipschitz constant $L = 1$.
Furthermore, we consider the nonlinear problem
\begin{equation}
\label{eq:one-sided-lipschitz-nonlinear}
  u'(t) = \exp\bigl( -u(t) \bigr),
  \;
  t \in (0, 100),
  \qquad
  u(0) = 1,
\end{equation}
with one-sided Lipschitz constant $L = 0$ (since $u \mapsto \e^{-u}$
is monotonically decreasing).

We measure the error of numerical solutions at the final time and
use the residual-based $L^2$ error estimate for \EPS control of the
time step size with the PID controller given by $\beta = (0.6, -0.2)$.
The results for Heun's second-order method and the third-order method
of Bogacki and Shampine \cite{bogacki1989a32} are shown in
Figure~\ref{fig:one-sided-lipschitz}.
First, we observe the expected scaling of the global error as
$\tau^{p / (p + 1)}$ in all cases. Moreover, we see that the global error
estimate of Lemma~\ref{lem:estimate} is reliable since it yields an upper
bound on the error in all cases.

\begin{figure}[htb]
\centering
  \begin{subfigure}{0.49\textwidth}
  \centering
   \includegraphics[width=\textwidth]{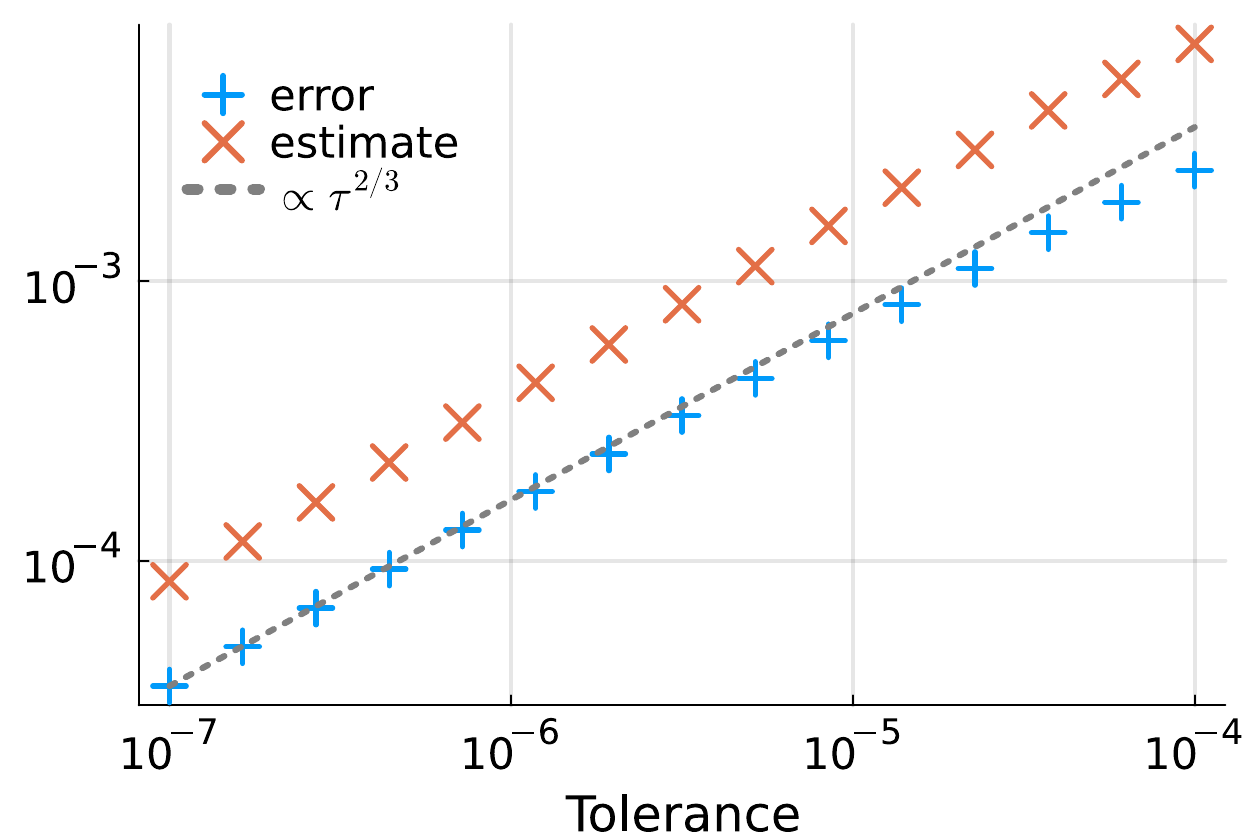}
    \caption{Linear problem \eqref{eq:one-sided-lipschitz-linear},
             second-order method of Heun.}
  \end{subfigure}%
  \hspace*{\fill}
  \begin{subfigure}{0.49\textwidth}
  \centering
   \includegraphics[width=\textwidth]{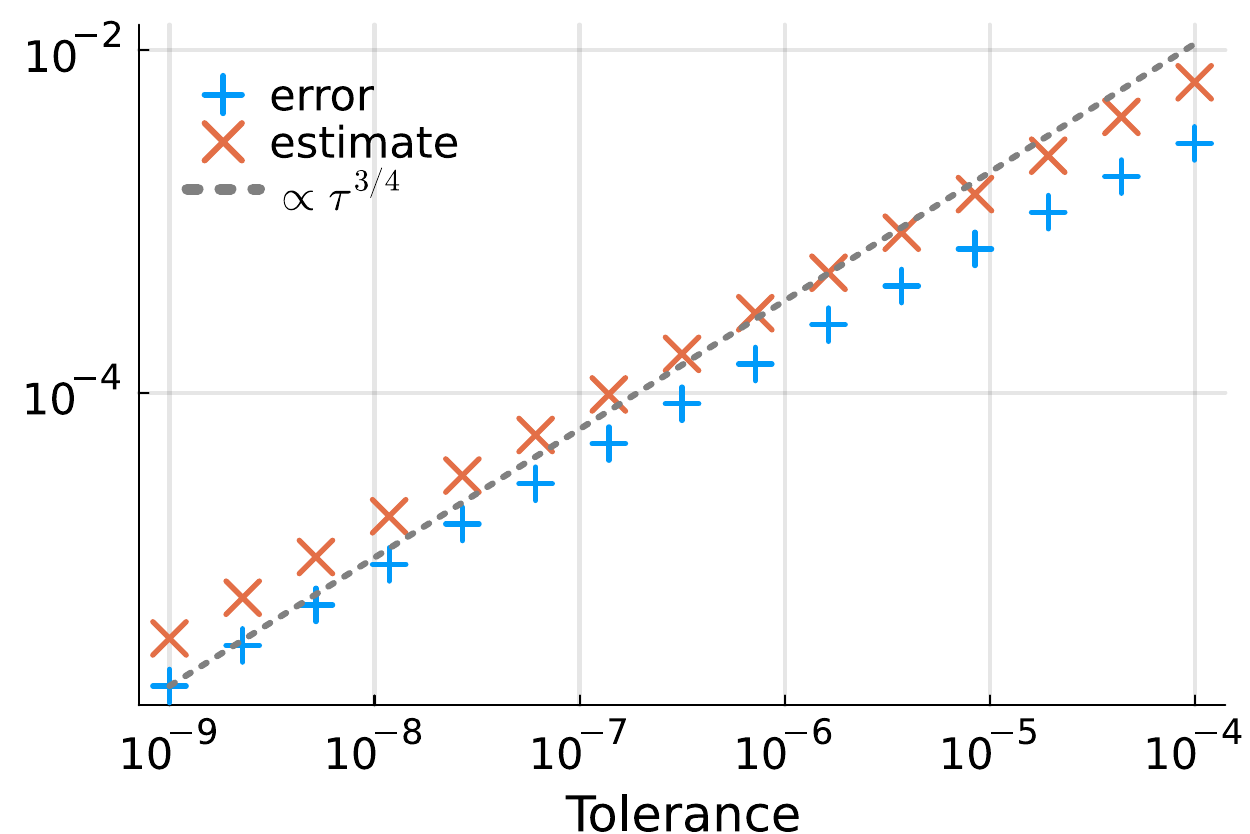}
    \caption{Linear problem \eqref{eq:one-sided-lipschitz-linear},
             third-order method of Bogacki and Shampine \cite{bogacki1989a32}.}
  \end{subfigure}%
  \\
  \begin{subfigure}{0.49\textwidth}
  \centering
   \includegraphics[width=\textwidth]{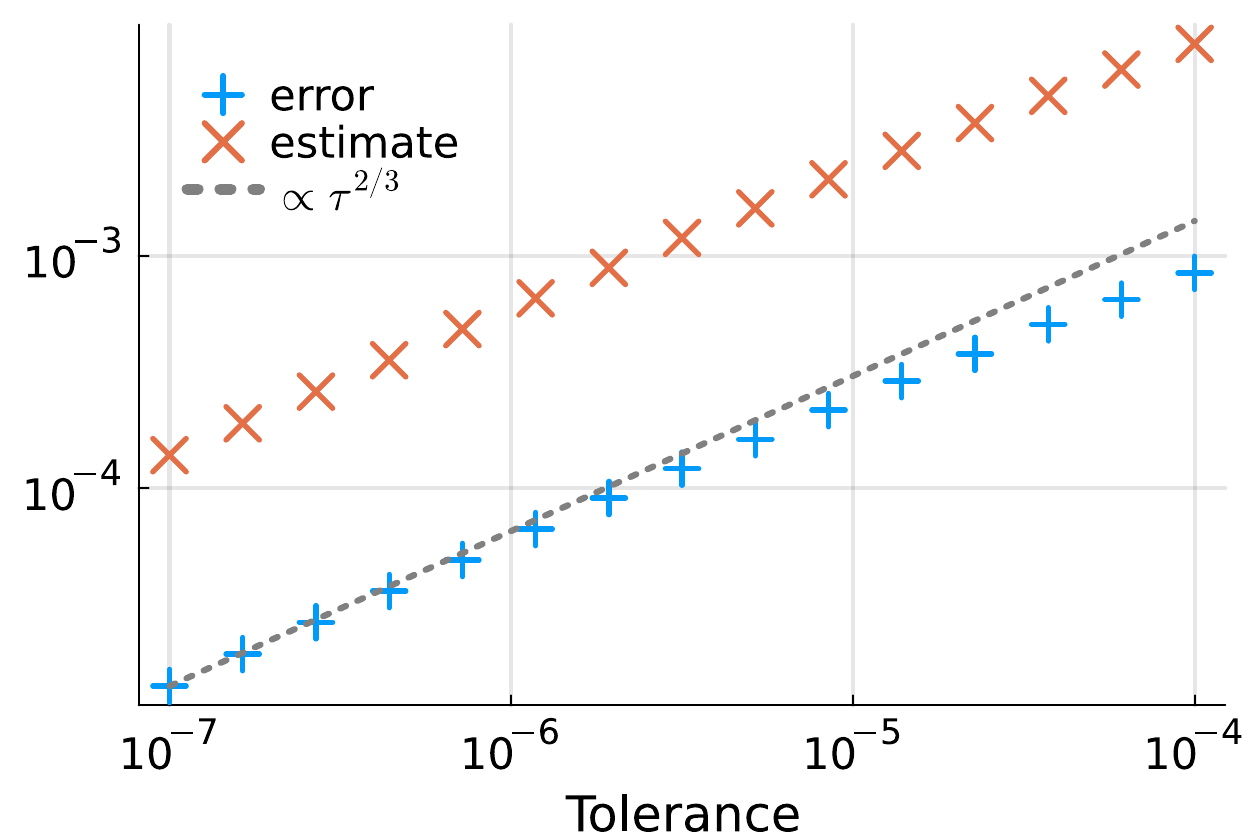}
    \caption{Nonlinear problem \eqref{eq:one-sided-lipschitz-nonlinear},
             second-order method of Heun.}
  \end{subfigure}%
  \hspace*{\fill}
  \begin{subfigure}{0.49\textwidth}
  \centering
   \includegraphics[width=\textwidth]{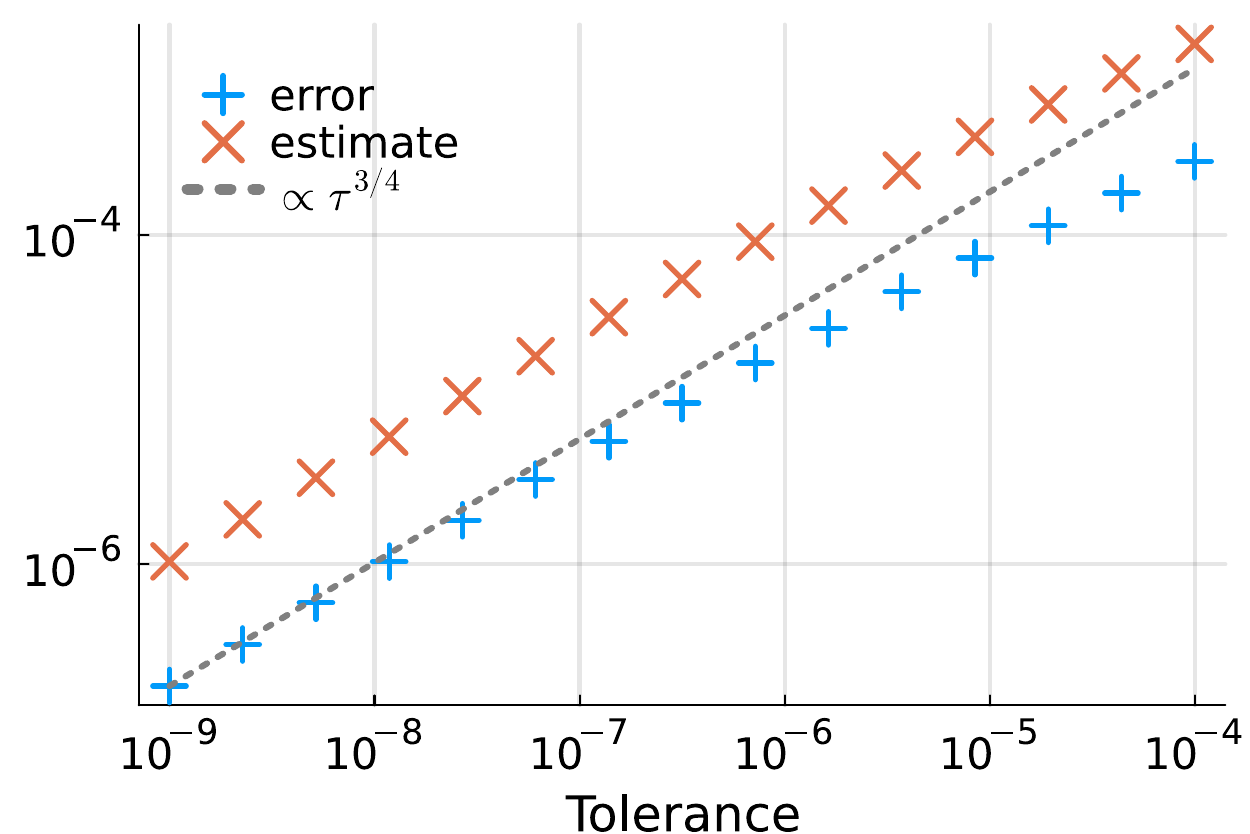}
    \caption{Nonlinear problem \eqref{eq:one-sided-lipschitz-nonlinear},
             third-order method of Bogacki and Shampine \cite{bogacki1989a32}.}
  \end{subfigure}%
  \caption{Comparison of the global error estimate at the final time
           obtained from Lemma~\ref{lem:estimate} with residual-based
           $L^2$ estimates and one-sided Lipschitz constants. The time step
           size is adapted using the same error estimates and the PI controller
           with parameters $\beta = (0.6, -0.2)$.}
  \label{fig:one-sided-lipschitz}
\end{figure}

\subsection{2D linear advection}
\label{sec:linadv}

We consider the 2D linear advection equation
\begin{equation}
  \partial_t u + \div(a u) = 0
\end{equation}
with periodic boundary conditions in $[-1, 1]^2$, the advection velocity
$a = (1, 1)^T$, and a sinusoidal initial condition. Using the method of lines
approach, we discretize the PDE first in space with a discontinuous Galerkin
spectral element method (DGSEM) on Gauss-Lobatto-Legendre nodes representing
polynomials of degree $p = 4$; an introduction to such nodal DG methods is
given in the textbooks \cite{hesthaven2007nodal,kopriva2009implementing}.
We divide the domain into $8^2$ uniform elements and apply the
local Lax-Friedrichs/Rusanov flux at interfaces.
Finally, we integrate the resulting ODE in time using the third-order method
of Bogacki and Shampine \cite{bogacki1989a32} with PI controller parameters
$\beta = (0.6, -0.2)$.

\begin{figure}[htbp]
\centering
  \begin{subfigure}{0.49\textwidth}
  \centering
   \includegraphics[width=\textwidth]{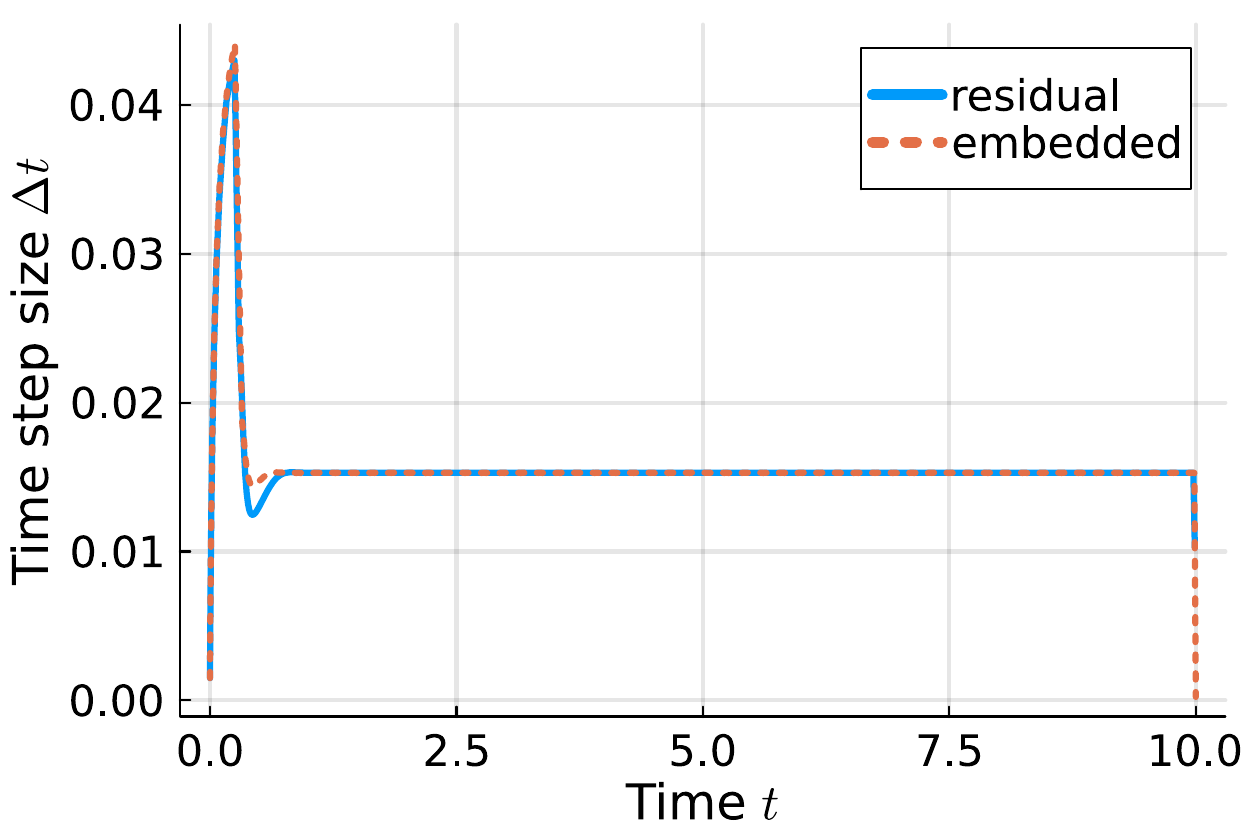}
    \caption{Time step size with tolerance $\tol = 10^{-4}$.}
    \label{fig:linadv_dt_vs_t}
  \end{subfigure}%
  \hspace*{\fill}
  \begin{subfigure}{0.49\textwidth}
  \centering
   \includegraphics[width=\textwidth]{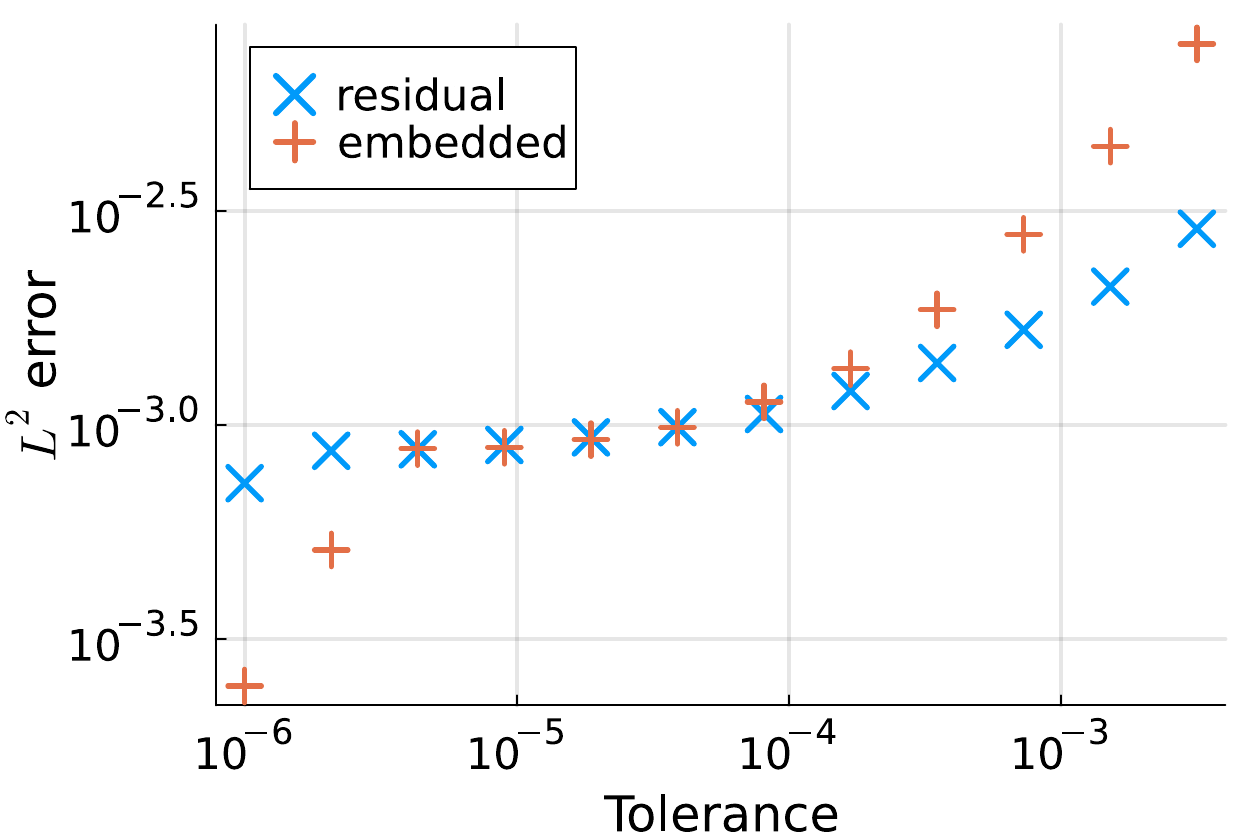}
    \caption{Discrete $L^2$ error for different tolerances.}
  \end{subfigure}%
  \\
  \begin{subfigure}{0.49\textwidth}
  \centering
   \includegraphics[width=\textwidth]{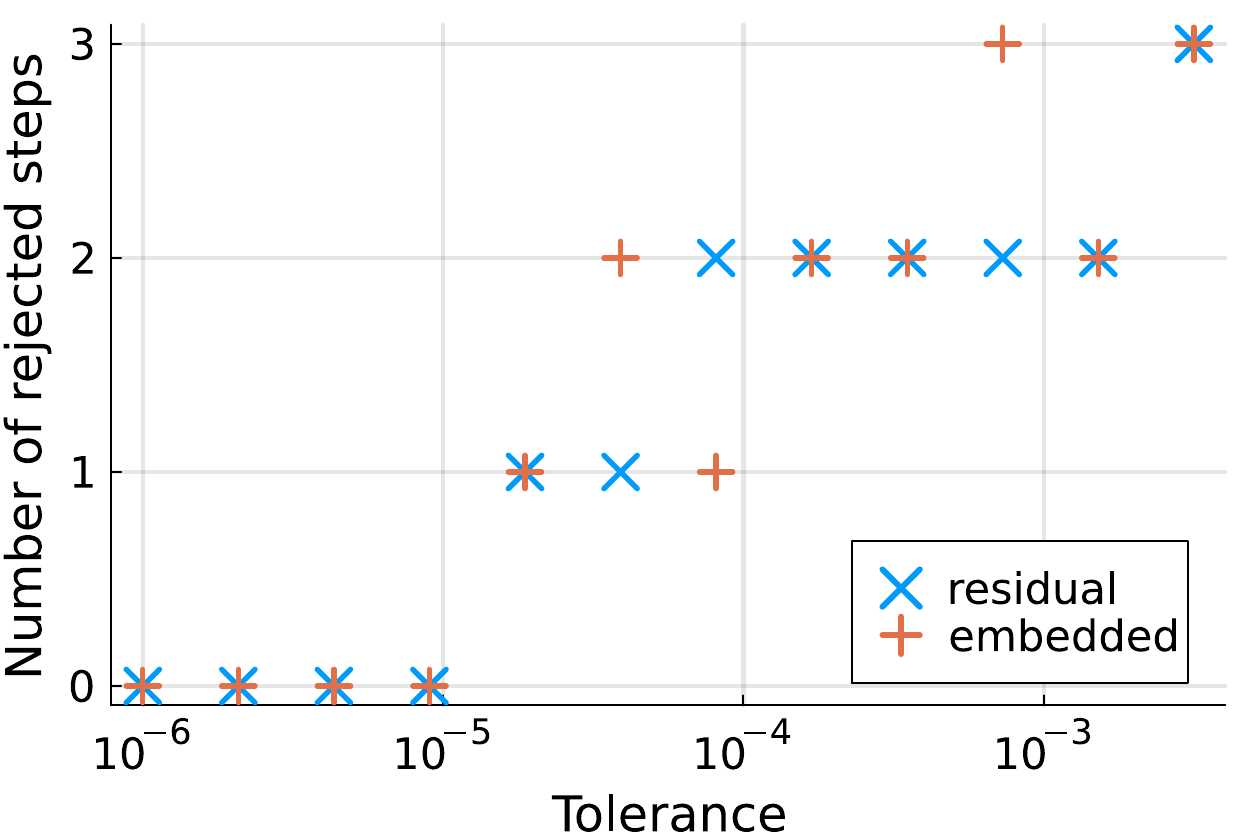}
    \caption{Number of rejected steps.}
  \end{subfigure}%
  \caption{Results for the linear advection problem with DGSEM
           semidiscretization in space and the third-order method of
           Bogacki and Shampine \cite{bogacki1989a32} in time with
           PI controller parameters $\beta = (0.6, -0.2)$.}
  \label{fig:linadv}
\end{figure}

As discussed in \cite{ranocha2021optimized,ranocha2023error}, a typical
behavior for such problems is as follows. Initially, the time step size
varies a bit and quickly converges to a constant. There is usually a range
of tolerances where the time step size $\dt$ is restricted by stability.
In this regime, error-based step size control with stable controllers results
in optimal time step sizes that can also be obtained by manually optimizing a
Courant-Friedrichs-Lewy (CFL) factor.
This behavior is shown in Figure~\ref{fig:linadv}; the embedded method and
the residual-based error estimator behave very similarly.
In particular, both methods lead to at most three rejected steps for loose
tolerances.

\subsection{3D inviscid Taylor-Green vortex}
\label{sec:euler}

Next, we consider the ideal 3D compressible Euler equations to simulate
an inviscid Taylor-Green vortex. We choose the initial condition
\begin{equation}
\begin{gathered}
  \rho = 1, \;
  v_1 =  \sin(x_1) \cos(x_2) \cos(x_3), \;
  v_2 = -\cos(x_1) \sin(x_2) \cos(x_3), \;
  v_3  = 0, \\
  p = \frac{\rho}{\mathrm{Ma}^2 \gamma} + \rho \frac{\cos(2 x_1) \cos(2 x_3) + 2 \cos(2 x_2) + 2 \cos(2 x_1) + \cos(2 x_2) \cos(2 x_3)}{16},
\end{gathered}
\end{equation}
where $\mathrm{Ma} = 0.1$ is the Mach number, $\rho$ the density, $v$ the
velocity, and $p$ the pressure.
We consider the domain $[-\pi, \pi]^3$ with periodic boundary conditions
and a time interval $[0, 10]$.
We use the entropy-stable semidiscretization with flux differencing DGSEM
with polynomials of degree $p = 3$, the entropy-conservative flux of
\cite{ranocha2018comparison,ranocha2020entropy,ranocha2021preventing}
in the volume and a local Lax-Friedrichs/Rusanov numerical flux at interfaces.
This kind of flux differencing discretizations is described in
\cite{fisher2013high,gassner2016split}; see also \cite{ranocha2023efficient}.

\begin{figure}[htbp]
\centering
  \begin{subfigure}{0.49\textwidth}
  \centering
   \includegraphics[width=\textwidth]{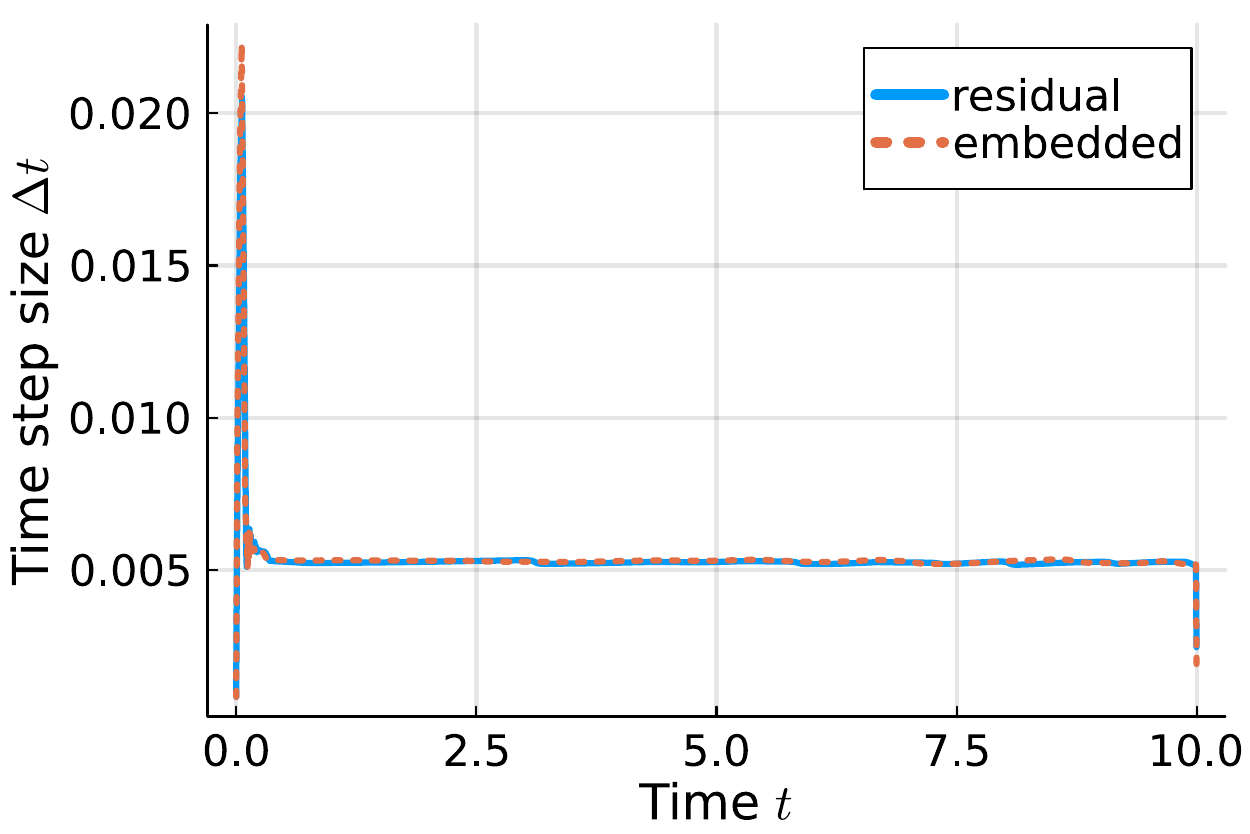}
  \end{subfigure}%
  \caption{Results for the 3D compressible Euler equations setup for an
           inviscid Taylor-Green vortex with entropy-stable flux differencing
           DGSEM in space and the third-order method of
           Bogacki and Shampine \cite{bogacki1989a32} in time with
           PI controller parameters $\beta = (0.6, -0.2)$ and tolerance
           $\tol = 10^{-5}$.}
  \label{fig:euler}
\end{figure}

The results of a simulation with $8^3$ elements and a time integration
tolerance $\tol = 10^{-5}$ are shown in Figure~\ref{fig:euler}. In this case,
the time step size $\dt$ is again restricted by stability constraints.
The time step sizes chosen by the different estimators --- the embedded method
and the $L^1$ residual estimate --- are visually indistinguishable.
The residual-based approach leads to 3 step rejections while the embedded
method leads to 2 rejected steps.

\section{Summary and conclusions}
\label{sec:summary}

We have analyzed stability of step size control of
explicit Runge-Kutta methods for ODEs using residual-based a posteriori error estimators of
\cite{dedner2016posteriori} when step sizes are dictated by stability and not accuracy.
It turned out that the situation is comparable to the case of embedded
methods, i.e., the classical I controller does not lead to step size control
stability while more advanced PI and PID controllers can be designed to be
stable. We have analyzed the situation for ODEs and demonstrated that the
findings extend to some PDEs discretized using the method of lines.
In particular, we have considered the nonlinear dispersive Benjamin-Bona-Mahony
equation as well as discontinuous Galerkin semidiscretizations of the 2D linear
advection and 3D compressible Euler equations.

The general behavior of the methods using estimates obtained from the
residual-based approach and embedded methods was comparable in the numerical
experiments. Thus, the main difference is that the residual-based approach
leads additionally to a rigorous a posteriori estimate of the global error
while the embedded methods are computationally cheaper.

\section*{Acknowledgments}

HR was supported by the Deutsche Forschungsgemeinschaft
(DFG, German Research Foundation, project number 513301895)
and the Daimler und Benz Stiftung (Daimler and Benz foundation,
project number 32-10/22).

\printbibliography

\end{document}